\documentclass[12pt]{amsart}
\usepackage{enumerate}
\usepackage{hyperref}
 \theoremstyle{plain}
 \newtheorem{theorem}{Theorem}

 \newtheorem{lemma}{Lemma}
 \newtheorem{corollary}{Corollary}
\newtheorem{proposition}{Proposition}

\theoremstyle{definition}
\newtheorem{definition}{Definition}

\newtheorem{assumption}{Assumption}
 
 \newtheorem{remark}{Remark}

\newcommand{\A}{\mathbb A}

 \newcommand{\N}{\mathbb N}

 \newcommand{\R}{\mathbb R}

 \newcommand{\cC}{\mathcal C}
 \newcommand{\cD}{\mathcal D}
 
 \newcommand{\cW}{W^{1,1}}
 \newcommand{\cWD}{\cW_\cD}
 
 \newcommand{\cF}{\mathcal F}
\newcommand{\cG}{\mathcal G}

\newcommand{\ovu}{\overline{u}}
\newcommand{\unu}{\underline{u}}

\newcommand{\bh}{\overline{h}}

\newcommand{\bfh}{\mathbf{h}}

\newcommand{\ub}{\bar{u}}

 \newcommand{\af}{\alpha}
 \newcommand{\fui}{\varphi}
 \newcommand{\ep}{\varepsilon}
\newcommand{\be}{\beta}
\newcommand{\ga}{\gamma}

 \newcommand{\ka}{\kappa}
 
 \newcommand{\de}{\delta}
 
 \newcommand{\om}{\omega}
 
  \newcommand{\la}{\lambda}

 \newcommand{\dga}{\dot\gamma}

 \newcommand{\rip}{\rangle}
 \newcommand{\lip}{\langle}

\newcommand{\Lip}{\operatorname{Lip}}

\newcommand{\entre}{\setminus}

\begin{document}
\title [subriemannian Lagrangians depending on the unknown]
{Weak KAM theorems  for subriemannian Lagrangians
depending on the unknown function}
\author [R.  Iturriaga,]{Renato Iturriaga}
\address{Cimat, Valenciana Guanajuato 36000, M\'exico}
\author [H. S\'anchez Morgado]{ H\'ector S\'anchez Morgado}
\address{Instituto de Matem\'aticas, UNAM. Ciudad Universitaria C. P. 04510, Cd. de M\'exico, M\'exico.}
\email{hector@math.unam.mx}
\subjclass[2020]{Primary: 37J51, 37J55, 49L25; Secondary: 53C17.}
\begin{abstract}
We extend some results of weak KAM theory to Lagrangians that are defined
  only on the horizontal distribution of a subriemannian
  manifold and depend on the unknown function.
  \end{abstract}
\maketitle

\section{Statement of results}
\label{sec:results}
Let $(M, \cD, \lip,\rip)$ be a subriemannian manifold such that
$\cD$ is bracket generating $TM$ and denote by $\pi:TM\to M$
and $\pi^*:T^*M\to M$ the natural projections.
\begin{definition}We say that
  \begin{itemize}
  \item 
 A continuous curve $\ga : [a, b]\to M$ is {\em absolutely continuous} iff
$\fui\circ\ga$ is  absolutely continuous for any smooth and compactly
supported $\fui:M\to\R$.
 \item 
An absolutely continuous curve $\ga:[a,b]\to M$ is {\em horizontal} if
$\dga(t)\in\cD$  for a. e. $t\in[a,b]$.
\item
 We denote by $\cWD([a,b])$
the set of horizontal absolutely continuous curves defined on the
interval $[a,b]$. 
\end{itemize}
For $\ga\in\cWD([a,b])$ we define its
subriemannian length by
\[\ell(\ga)=\int_a^b\|\dga(t))\|\ dt.\]
From that one obtains the {\em Carnot-Catatheodory} distance $d$ on $M$ which
defines a topology on $M$ that coincides with its
original manifold topology.
\end{definition}
We assume $M$ is compact and $L\in C^2(\cD\times\R)$ is
\begin{description}
  \item [Uniformly superlinear] For all $k\ge 0$ there is $C(k)\in\R$.
  such that \[L(v,0)\ge k\|v\|+C(k)\hbox{ for all }v\in\cD.\]
\item[Bounded monotone] There is $\la>0$ such that for all $v\in\cD$, $u\in\R$.
  \[-\la \le\partial_uL(v,u)<0.\]
\item [Strictly convex] There is  $\ka>0$ such that for all
  $u\in\R$, $v,w\in\cD$
  \[\partial_{vv}^2L(v,u)  (w,w)\ge\ka\|w\|^2.\]  
\end{description}
The subriemannian contact Hamiltonian $H:T^*M\times\R\to\R$ is defined by
\[H(p,u)=\max\{p(v)-L(v,u):v\in\cD, \pi(v)=\pi^*(p)\}\]
\begin{theorem}\label{main-evolution}
Given $\fui\in C(M)$ there exists a unique $u\in C([0,\infty)\times M)$ such that
\[u(t,x):
 =\inf\{\fui(\ga(0))+\int_0^tL(\dga(s),u(s,\ga(s)))\ ds:\ga\in\cWD([0,t]),\ \ga(t)=x\},\]

and so $u$ is a viscosity solution of the Cauchy problem
\begin{equation}\label{cauchy}
  \begin{cases} u_t+H(x,D_xu,u)=0\\ u(0,x)=\fui(x).   \end{cases}
\end{equation}
Define the Lax semigroup $T_t:C(M)\to C(M)$, $t\ge 0$ by $T_t[\fui](x)=u(t,x)$.
\end{theorem}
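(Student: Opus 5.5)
We obtain $u$ as the unique fixed point of a contraction, built from the \emph{frozen} action. For $v\in C([0,t_0]\times M)$ define
\[\Phi[v](t,x)=\inf\Bigl\{\fui(\ga(0))+\int_0^tL(\dga(s),v(s,\ga(s)))\,ds:\ga\in\cWD([0,t]),\ \ga(t)=x\Bigr\}.\]
A solution of the implicit formula in the statement is exactly a fixed point $u=\Phi[u]$. By the bounded monotone hypothesis, $|L(w,a)-L(w,b)|\le\la|a-b|$ for all $w\in\cD$ and $a,b\in\R$. Comparing the two infima curve by curve then gives
\[\|\Phi[v]-\Phi[w]\|_{\infty,[0,t]}\le\int_0^t\|v-w\|_{\infty,[0,s]}\,ds.\]
Hence $\Phi$ is a contraction for the weighted norm $\sup_t e^{-2\la t}\|v(t,\cdot)\|_\infty$ on $C([0,t_0]\times M)$, for every $t_0$. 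Uniqueness on $[0,t_0]$, and therefore on $[0,\infty)$, follows from the same Gronwall-type estimate applied to two fixed points.

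The step that needs real work is that $\Phi$ maps $C([0,t_0]\times M)$ into itself: $\Phi[v]$ must be finite and jointly continuous.

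\textbf{Bounds.} Since $\partial_uL\ge-\la$, we have $L(w,a)\ge L(w,0)-\la|a|$. Combined with uniform superlinearity, this gives $L(w,v)\ge k\|w\|+C(k)-\la\|v\|_\infty$. So $\Phi[v]$ is bounded below by $\min\fui+t(C(0)-\la\|v\|_\infty)$. It is bounded above by using the constant curve.

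\textbf{Continuity in $x$ for $t\ge\de>0$.} We modify a near-minimizing curve near its endpoint. Concatenate it with a short horizontal geodesic, reparametrized on a short time interval so that the added action is $O(d(x,y)^{1/2})$ or better, using compactness of $M$ and the equality of the Carnot--Carath\'eodory and manifold topologies. The superlinear bound controls how far near-minimizers travel, and this gives a modulus of continuity for $\Phi[v](t,\cdot)$.

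\textbf{Continuity as $t\to0$.} Near $t=0$ we use uniform continuity of $\fui$. Superlinearity forces near-minimizers starting far from $x$ to have large action, so $\Phi[v](t,x)\to\fui(x)$ uniformly.

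\textbf{Continuity in $t$.} We cut or extend curves at the endpoint with bounded velocity.

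This continuity of the value function for a frozen, merely continuous $v$ is the main obstacle. I would first try to prove it only for $\fui$ Lipschitz and $v$ continuous. The general case then follows by uniform approximation, using the contraction estimate, which also shows $\|\Phi[v]-\Phi[v']\|$ is controlled when $\fui$ changes.

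Finally, $u$ is a viscosity solution of \eqref{cauchy} by the standard argument, which rests on two facts.

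\textbf{Dynamic programming principle.} For $0\le s<t$,
\[u(t,x)=\inf_\ga\Bigl\{u(s,\ga(s))+\int_s^tL(\dga,u(\tau,\ga(\tau)))\,d\tau\Bigr\},\]
which follows directly from the formula, since $u$ is fixed.

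\textbf{Test functions.} Let $\psi$ be smooth and touch $u$ from above at $(t,x)$. Taking curves $\ga$ with $\dga\equiv w\in\cD_x$ (extended horizontally by a smooth horizontal field) and using continuity of $u$ gives $\psi_t+d_x\psi(w)-L(w,u)\le0$ for every $w$. Taking the maximum over $w$ gives the subsolution inequality.

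Now let $\psi$ touch $u$ from below at $(t,x)$. We take near-optimal curves on $[t-h,t]$ and use convexity and superlinearity in the form $d_x\psi(\dga)-L(\dga,u)\le H(d_x\psi,u)$. Letting $h\to0$ gives the supersolution inequality.

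Here $H$ is finite and continuous by superlinearity and strict convexity.
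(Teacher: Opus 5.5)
Your architecture is the paper's. Your $\Phi$ is the paper's frozen operator $\A_\fui$. Your weighted-norm contraction is equivalent to the paper's iterate bound $\frac{(\la t)^n}{n!}\|\ub-u\|_\infty$; your displayed estimate lacks the factor $\la$, but the weight $e^{-2\la t}$ shows you meant it. The viscosity part is the same standard argument. The gap is in the step you flag yourself, the continuity of $\Phi[v]$, and the mechanisms you sketch do not close it.

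Write $A(r)=\sup\{L(w,0):\|w\|\le r\}$, $A^v(r)=A(r)+\la\max v_-$, $C^v(k)=C(k)-\la\max v_+$ and $\de=d(x,y)$. Appending a short geodesic from $x$ to $y$ to a near-minimizer changes the final time, so what it actually gives is
\[\Phi[v](t+\de,y)\le\Phi[v](t,x)+A^v(1)\,\de .\]
Extending by a constant curve gives $\Phi[v](s,x)\le\Phi[v](t,x)+A^v(0)(s-t)$ for $s>t$. Both are the easy directions. To return to time $t$ you need a bound $\Phi[v](t,y)\le\Phi[v](t+\de,y)+o(1)$ uniform in $y$. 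Cutting a near-minimizer at its \emph{endpoint} lands at $\ga(t)\ne x$, so it presupposes the spatial continuity you want to derive, and the scheme is circular.

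Keeping the final time fixed does not work with what you have either. Replacing the last arc $\ga|_{[t-\tau,t]}$ needs $d(\ga(t-\tau),x)=O(\tau)$, but superlinearity only gives $o(1)$; with $v$ merely continuous, no a priori velocity bound on near-minimizers is available. So the cost $\tau A^v(D/\tau)$, $D=d(\ga(t-\tau),y)$, of the new arc is not controlled by the removed action. Compressing the whole curve into a shorter interval would require an upper bound on $L(\mu w,\cdot)$, $\mu>1$, in terms of $L(w,\cdot)$, which convexity and superlinearity do not give.

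The missing idea is to cut at the \emph{initial} end and absorb the cut arc into a Lipschitz initial datum. For $r>0$ and $\ga\in\cWD([0,t+r])$ with $\ga(t+r)=x$, set $\be(\tau)=\ga(\tau+r)$, $\tau\in[0,t]$. Superlinearity with slope $\Lip(\fui)$ gives
\[\fui(\ga(0))+\int_0^rL(\dga,v)\,d\tau\ge\fui(\ga(0))+\Lip(\fui)\,d(\ga(0),\ga(r))+C^v(\Lip\fui)\,r\ge\fui(\be(0))+C^v(\Lip\fui)\,r.\]
Shifting the time argument of $v$ along $\be$ costs at most $\la\,\om_v(r)$ per unit time, where $\om_v$ is a modulus of continuity of $v$. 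Hence
\[\Phi[v](t+r,x)\ge\Phi[v](t,x)+C^v(\Lip\fui)\,r-\la t\,\om_v(r).\]
Together with the geodesic extension this yields
\[\Phi[v](t,y)\le\Phi[v](t+\de,y)-C^v(\Lip\fui)\,\de+\la t\,\om_v(\de)\le\Phi[v](t,x)+\bigl(A^v(1)-C^v(\Lip\fui)\bigr)\de+\la t\,\om_v(\de).\]
With the easy time bound, this gives joint continuity with a uniform modulus.

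This is precisely why the reduction to Lipschitz $\fui$ that you mention is needed. The general case follows because replacing $\fui$ by $\psi$ moves $\Phi[v]$ by at most $\|\fui-\psi\|_\infty$. Your argument for $t\to0$ is this absorption at time $0$; shifting the curve by $r$ is what makes it work at every time $t$.
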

\begin{theorem}\label{main-stationary}
  Let $c_0$ be the Ma\~n\'e critical value of $L(\cdot,0)$ and denote by $T_t^*$ the Lax semigroup  of $L+c_0$, then 
  \begin{enumerate}[(a)]
  \item 
If $\fui\in\Lip(M,d)$, the family $T^*_t[\fui]$ is uniformly bounded and equi-Lipschitz.
\item There is $\psi\in\Lip(M,d)$ such that $T^*_t[\psi]=\psi$ for
  any $t\ge 0$, and so  $\psi$ is a viscosity solution of the Hamilton-Jacobi equation 
\begin{equation}\label{eq:HJ}
  H(x,D_xu,u)=c_0
\end{equation}
\item Under assumption \ref{Bar} below, if $\fui\in\Lip(M,d)$, the uniform limit
  $u=\lim\limits_{t\to\infty}T^*_t[\fui]$ exists and it is the unique
  solution of \eqref{eq:HJ}.
  \end{enumerate}
\end{theorem}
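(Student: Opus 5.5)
We will prove Theorem~\ref{main-stationary} by following the standard weak KAM scheme for contact (discounted-type) Lagrangians, adapted to horizontal curves. Everything rests on the representation formula of Theorem~\ref{main-evolution} and on two properties of $T^*_t$ that follow from it.

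\emph{Monotonicity.} If $\fui\le\psi$ then $T^*_t\fui\le T^*_t\psi$.

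\emph{Non-expansiveness.} We aim for $\|T^*_t\fui-T^*_t\psi\|_\infty\le \|\fui-\psi\|_\infty$. In fact, because $\partial_uL<0$, for $c\ge0$ one has $T^*_t(\fui+c)\le T^*_t\fui+c$. This is the key tool. Both properties should come from comparing the implicit formulas along a fixed curve and applying Gronwall with the bound $-\la\le\partial_uL<0$.

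For (a), let $c_0$ be the critical value of $L(\cdot,0)$. By the subriemannian weak KAM theorem for the $u$-independent Lagrangian $L(\cdot,0)$, which we take as known (Chow–Rashevskii plus superlinearity), there is a Lipschitz weak KAM solution $w$ with $w=\inf\{w(\ga(0))+\int_0^t (L(\dga,0)+c_0)\}$. Since $L(v,u)\le L(v,0)$ for $u\ge0$ and $\ge$ for $u\le0$, the functions $w+\max w^-$ and $w-\max w^+$ should be a supersolution and a subsolution respectively, i.e.\ $T^*_t(w+a)\le w+a$ and $T^*_t(w-b)\ge w-b$ for suitable constants $a,b$. Monotonicity then traps $T^*_t[\fui]$ between $w-b-\|\fui-w\|$ and $w+a+\|\fui-w\|$, which gives the uniform bound.

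For the equi-Lipschitz bound, take $x,y$ with $d(x,y)=\delta$. We modify a near-minimizer for $x$ on $[0,t]$ by replacing its last portion of duration $\delta$ with a subriemannian geodesic from its point at time $t-\delta$ to $y$, so that the speed is bounded and the cost is $O(\delta)$. Using the uniform bound on $u$ (so that $L$ is bounded on bounded speeds times bounded $u$), and the Lipschitz dependence in $u$ to control how the changed values of $u$ propagate, we expect $T^*_t\fui(y)-T^*_t\fui(x)\le K\delta$ for $t\ge1$. For $t\le1$ we use the Lipschitz constant of $\fui$ together with a rescaled argument. We expect this step to be the \textbf{main technical obstacle}, since the integrand depends on $u$ along the curve and one needs an a priori bound on the speed of near-minimizers. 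Our first attempt is to derive this speed bound from superlinearity and strict convexity, via an energy estimate in the style of Tonelli on intervals of length $1$.

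For (b), we use Arzel\`a–Ascoli, the compactness of $M$, and (a). The set $\mathcal K$ of functions that are $K$-Lipschitz and bounded by $R$ is convex and compact in $C(M)$, and if $R,K$ are chosen from (a) it is invariant under $T^*_t$ for large $t$. Alternatively, start from $\fui=w-b$: then $T^*_t\fui$ is nondecreasing in $t$ and bounded above, so by Dini it converges monotonically to some $\psi$, and $\psi$ is Lipschitz by (a). Continuity of $T^*_s$, which follows from non-expansiveness, together with the semigroup property then gives $T^*_s\psi=\lim_t T^*_{s+t}\fui=\psi$. Viscosity follows from Theorem~\ref{main-evolution}, since a time-independent solution of \eqref{cauchy} for $L+c_0$ solves \eqref{eq:HJ}.

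For (c), assuming the unspecified Assumption~\ref{Bar}, which presumably gives a comparison principle or strict contraction for \eqref{eq:HJ}, uniqueness of the fixed point follows. Convergence follows by sandwiching: $T^*_t[\fui]$ lies between $T^*_t$ of a subsolution and $T^*_t$ of a supersolution, both of which converge monotonically to fixed points. By uniqueness these limits coincide with $u$.
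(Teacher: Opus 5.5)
\textbf{Gap: the equi-Lipschitz step of (a).} Replacing the last portion of duration $\de$ of a minimizer by a geodesic to $y$ costs $O(\de)$ only if $d(\ga(t-\de),x)=O(\de)$. That requires an a priori speed bound for minimizers, and nothing in this setting provides one. The integrand $L(\dga,u(\tau,\ga(\tau)))$ is non-autonomous with $u$ merely continuous, so there is no Euler--Lagrange equation or energy identity to exploit. In the subriemannian setting minimizers may also be abnormal. Superlinearity alone (Lemma~\ref{UI}) gives $\int_{t-\de}^t\|\dga\|\to 0$ uniformly. That yields a modulus of continuity, not a Lipschitz constant. Since your (b) and (c) need equi-Lipschitz bounds (for Dini/uniform convergence and for $\psi\in\Lip(M,d)$), the gap propagates.

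The missing idea is to avoid cutting into the minimizer at all. Here is how the paper does it:
\begin{enumerate}[(i)]
\item For Lipschitz $\fui$, the constant curve and superlinearity with $k=\Lip(\fui)$ give $\|T^*_r\fui-\fui\|_\infty\le K_1 r$. Here $K_1$ depends only on $\Lip(\fui)$, $c_0$, $\la$ and the uniform bound $R$ on $T^*_t\fui$.
\item The semigroup property and Proposition~\ref{non-exp} then give $\|T^*_{t+r}\fui-T^*_t\fui\|_\infty\le\|T^*_r\fui-\fui\|_\infty\le K_1 r$ for every $t\ge 0$.
\item Next, \emph{append} a unit-speed geodesic from $x$ to $y$ on $[t,t+d]$, with $d=d(x,y)$. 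By Proposition~\ref{fix=sol}(a), $T^*_{t+d}\fui(y)\le T^*_t\fui(x)+K_2 d$ with $K_2=A(1)+c_0+\la R$.
\item Combining, $T^*_t\fui(y)\le T^*_{t+d}\fui(y)+K_1 d\le T^*_t\fui(x)+(K_1+K_2)d$.
\end{enumerate}
This holds uniformly in $t\ge 0$, with no separate small-time case. Your worry about changed values of $u$ propagating is also moot: Proposition~\ref{fix=sol}(a) compares along any horizontal curve with $u$ held fixed.

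\textbf{What is sound.} Monotonicity, non-expansiveness and your uniform bound are correct, and the bound is reached by a different, cleaner route than the paper's. The paper uses $|h_t(x,y)+c_0t|\le C_\de$ from \cite{SM} plus a case analysis on where $u$ changes sign along a minimizer. Your claim that $w+\max w^-$ is a supersolution and $w-\max w^+$ a subsolution, for a weak KAM solution $w$ of $L(\cdot,0)$, does hold. A first-crossing argument along a calibrating curve of $w$ (respectively a minimizer for $T^*_t(w-b)$) works because on the crossing interval $u\ge 0$ (respectively $u\le 0$).

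Your monotone construction in (b) from $w-b$ and your sandwich in (c) between $T^*_t(w-b-c)$ and $T^*_t(w+a+c)$ are also fine once equi-Lipschitz is available. They would even give (c) for merely continuous $\fui$. Both your (c) and the paper's rest on the comparison principle supplied by Assumption~\ref{Bar}.

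The Schauder-type invariant-set remark is not justified: invariance is not shown, and a fixed point of each $T^*_t$ separately is not a common one. Your monotone alternative makes it unnecessary.
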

A subriemannian structure on $M$ defines a symmetric bundle map
 $\be:T^*M\to TM$,   called a {\em cometric,} by the conditions
  \begin{enumerate}
  \item $\be(T_x^*M)=\cD_x,$
  \item $\lip\be(p),v\rip=p(v)$,
  \end{enumerate}
and conversely, a symmetric  bundle map $\be:T^*M\to TM$ of constant rank
defines a subriemannian structure, see \cite{M}.
Defining $L^*:\cD\to\R$ by
\[L^*(w,u)=\max\{\lip w,v\rip-L(v,u):v\in\cD , \pi(v)=\pi(w)\},\]
we have $H(p)=L^*(\be(p))$.
Thus, subriemannian Hamiltonians are pseudo-coercive, and the uniqueness
of the solution of \eqref{eq:HJ} is guaranteed under the assumption in page 35
of the book \cite{Ba}
\begin{assumption}\label{Bar}
  For each $R>0$ there is $\Phi_R:\R\to\R$ such that
for all $(x,p)\in T^*M$, $|u|\le R$
\begin{equation}
  \label{eq:bar}
 \Big|\frac{\partial H}{\partial x}(x,p,u)\Big|\le (1+|p|) \Phi_R(H(x,p,u)).
\end{equation}
\end{assumption}
This equation makes perfect sense if $T^*M$ is trivial. Embedding $M$ in an
euclidean space we can make sense of \ref{eq:bar}.

The Lagrangian $L(x,v,u)=\frac 12 \|v\|^2-U(x,u)$ with $0<\partial_uU\le\la$,
has  Hamiltonian $H(x,p,u)=\frac 12 \|\be(p)\|^2+U(x,u)$ which
satisfies \eqref{eq:bar} with 
\[\Phi_R(z)=\max_{M\times[-R,R]}(|\partial_xU|,\|D_x\be\|)(1+\sqrt{(z-\min U(\cdot,R))_+}).\]
 \section{Tonelli's Theorem for non-autonomous Lagrangians}
\label{sec:tonelli}
\begin{definition}\quad
  \begin{itemize} 
   \item  A family $\cF$  of curves $\ga:[a,b]\to M$ is absolutely
  equicontinuous if $\forall\ep>0$, $\exists\de>0$
  s.t. $\forall\ga\in\cF$ and any  disjoint subintervals $ ]a_1,b_1[,\cdots,]a_N,b_N[$ 
 of $[a,b]$:
  \[\sum_{i=1}^Nb_i-a_i<\de\implies \sum_{i=1}^Nd(\ga(b_i),\ga(a_i))<\ep\]
\item A
 family $\cG\subset L^1([a,b],m)$,  $m$ the Lebesgue measure on
 $[a,b]$, is uniformly integrable 
if given $\ep>0$ there is $\de>0$ such that 
\[f\in\cG, E\subset[a,b]\hbox{ measurable}, m(E)<\de\implies
  \int_E|f|<\ep.\]
   \end{itemize}
 
\end{definition}
\begin{remark}\label{basic}\quad
  \begin{enumerate}
  \item  A family $\cF\subset\cWD([a,b])$ such that $\{\|\dga\| : \ga\in\cF\}$
    is uniformly integrable, is absolutely equicontinuous.
  \item An absolutely equicontinuous family is equicontinuous.
  \item A uniform limit of absolutely equicontinuous  functions
    is absolutely continuous.
    \end{enumerate}
\end{remark}
\begin{theorem}{\cite{SM}}\label{hor}
  Let $(\ga_n)_n$ be a sequence in $\cWD([a,b])$ that converges uniformly to
  $\ga$, and such that
  $( \|\dga _n\|)_n$ is uniformly integrable.   Then $\ga\in\cWD([a,b])$.
  \end{theorem}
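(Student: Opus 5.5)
The plan is to prove this directly in local coordinates, using the weak compactness that uniform integrability provides.

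First, $\ga$ is absolutely continuous. By Remark \ref{basic}(1) the family $\{\ga_n\}$ is absolutely equicontinuous, and by Remark \ref{basic}(3) its uniform limit $\ga$ is absolutely continuous. What remains is to show $\dga(t)\in\cD$ for a.e. $t$. This is a local statement.

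Localization. For each $s\in[a,b]$ choose a coordinate chart $U$ around $\ga(s)$ on which $\cD$ has a local orthonormal frame $X_1,\dots,X_k$ (shrinking $U$ if needed so that it has compact closure). By continuity of $\ga$ and compactness of $[a,b]$, we can cut $[a,b]$ into finitely many subintervals $[s_j,s_{j+1}]$, each with $\ga([s_j,s_{j+1}])$ contained in a compact subset $K_j$ of such a chart $U_j$. By uniform convergence, $\ga_n([s_j,s_{j+1}])\subset U_j$ for all large $n$. So it suffices to work on one subinterval $I=[c,e]$ inside one chart, where we identify $U\subset\R^m$.

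Representation and weak limit. On $I$ write $\dga_n(t)=\sum_i u_n^i(t)X_i(\ga_n(t))$ with measurable controls $u_n=(u_n^1,\dots,u_n^k)$. Orthonormality gives $|u_n(t)|=\|\dga_n(t)\|$, so $(u_n)$ is uniformly integrable and bounded in $L^1(I)$. By the Dunford--Pettis theorem, after passing to a subsequence, $u_n\rightharpoonup u$ weakly in $L^1(I,\R^k)$. In coordinates, for $t\in I$,
\[\ga_n(t)=\ga_n(c)+\int_c^t\sum_i u_n^i(s)X_i(\ga_n(s))\,ds.\]
Split each integral as
\[\int_c^t u_n^i\,\bigl(X_i(\ga_n)-X_i(\ga)\bigr)\,ds+\int_c^t u_n^i\,X_i(\ga)\,ds.\]
The first term is bounded by $\sup_n\|u_n\|_{L^1}\cdot\sup_I|X_i(\ga_n)-X_i(\ga)|$. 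This tends to $0$, because the $X_i$ are uniformly continuous on a compact neighborhood of $K$ and $\ga_n\to\ga$ uniformly. The second term converges to $\int_c^t u^iX_i(\ga)\,ds$, since $\mathbf 1_{[c,t]}X_i(\ga)$ is bounded and measurable, hence lies in $L^\infty$, the dual of $L^1$. Letting $n\to\infty$ gives
\[\ga(t)=\ga(c)+\int_c^t\sum_iu^i(s)X_i(\ga(s))\,ds,\]
so $\dga(t)=\sum_iu^i(t)X_i(\ga(t))\in\cD$ for a.e. $t\in I$.

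Combining the finitely many subintervals gives $\ga\in\cWD([a,b])$. The only delicate points are the localization and the passage to the limit in the product $u_n^iX_i(\ga_n)$. The first is handled by the uniform convergence of $\ga_n$. The second is exactly where uniform integrability is used: it supplies the weak $L^1$ compactness, and a uniformly convergent bounded factor can be paired with a weakly convergent $L^1$ factor.
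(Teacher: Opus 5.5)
Your proof is correct. The paper itself gives no proof of this statement; it simply quotes it from \cite{SM}, so there is no in-paper argument to match.

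What you give is a complete, self-contained proof by the standard route. You localize to charts where $\cD$ has an orthonormal frame, extract a weakly convergent subsequence of the controls by Dunford--Pettis, and pass to the limit in the integral equation using the splitting $u_n^iX_i(\ga_n)=u_n^i\bigl(X_i(\ga_n)-X_i(\ga)\bigr)+u_n^iX_i(\ga)$.

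Two minor points:
\begin{enumerate}
\item The $L^1$-boundedness of $(u_n)$, which you need for the first term, does follow from the paper's $\ep$--$\de$ definition of uniform integrability. It does so only because Lebesgue measure on $I$ is non-atomic: cover $I$ by finitely many intervals of length $<\de$. Say this explicitly.
\item Invoking Remark \ref{basic}(3) for absolute continuity is redundant. Your final identity $\ga(t)=\ga(c)+\int_c^t\sum_iu^iX_i(\ga)\,ds$ with $u^i\in L^1$ already gives absolute continuity of $\ga$ in each chart.
\end{enumerate}

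Your argument also supplies a step the paper uses without comment in the proof of Theorem \ref{Cle}. There, after trivializing $\cD$ as $U\times\R^m$ with the Euclidean inner product, the paper asserts that $\zeta_n\to\zeta$ in $\sigma(L^1,L^\infty)$ merely because $(\|\zeta_n\|)$ is uniformly integrable. Uniform integrability only gives weak subsequential limits; identifying the limit needs more. Your limit identity shows that any such limit $u$ satisfies $\dga=\sum_iu^iX_i(\ga)$ a.e. Linear independence of the frame then forces $u=\zeta$, so the whole sequence converges weakly to $\zeta$.
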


Let $L\in C(I\times\cD)$ be such that for each
$t\in I$, $x\in M$, the map $\cD_x\to\R$, $v\mapsto L(t,v)$ is twice
differentiable with $\partial^2L:I\times\cD\to\cD^*\otimes\cD^*$ 
continuous and the following properties hold
\begin{description}
\item [Uniform superlinearity] For all $k\ge 0$ there is $C(k)\in\R$.
  such that \[L(t,v)\ge k\|v\|+C(k)\hbox{ for all }(t,v)\in I\times\cD.\]
\item [Uniform boundedness] For all $r\ge 0$, we have
  \[A(r)=\sup\{L(t,v):\|v\|\le r\}<+\infty.\]
\item [Strict convexity] There is  $\ka>0$ such that for all  $t\in I$, $v,w\in\cD$ 
 \[\partial^2L (t,v)(w,w)\ge\ka\|w\|^2.\]  
\end{description}
For $\ga\in\cWD([a,b])$ we define the action
\[A_L(\ga)=\int_a^bL(t,\dga(t))\ dt.\]
\begin{lemma}\label{UI}
  For $c\in\R$ let
  $\cF(c)=\{\ga\in\cWD([a,b]): A_L(\ga)\le c\}.$
  Then the family
  $\{\|\dga(t)\|:\ga\in\cF(c)\}$ is uniformly integrable.
\end{lemma}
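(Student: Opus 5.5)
The plan is to use the standard de la Vallée-Poussin–type argument based on uniform superlinearity; strict convexity and uniform boundedness are not needed here. Fix $c\in\R$ and $\ep>0$. Take $k>0$ large, to be chosen in terms of $\ep$ and $c$, and apply uniform superlinearity with this $k$. Since $L(t,v)-k\|v\|-C(k)\ge 0$ everywhere, for any $\ga\in\cF(c)$ and any measurable $E\subset[a,b]$ we get
\[
k\int_E\|\dga\|\le \int_E \bigl(L(t,\dga)-C(k)\bigr)\,dt\le \int_a^b \bigl(L(t,\dga)-C(k)\bigr)\,dt + \int_{[a,b]\entre E}\bigl(C(k)-L(t,\dga)\bigr)\,dt .
\]

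The first term on the right equals $A_L(\ga)-C(k)(b-a)\le c-C(k)(b-a)$. For the second term we need a lower bound on $L$ that does not depend on $k$. Applying superlinearity with $k=0$ gives $L\ge C(0)$, so $C(k)-L(t,\dga)\le C(k)-C(0)$ on the complement of $E$. It is cleaner to reorganize and bound $\int_E L\,dt$ directly, using $L-C(0)\ge0$:
\[
\int_E L(t,\dga)\,dt = A_L(\ga)-\int_{[a,b]\entre E}L(t,\dga)\,dt\le c-C(0)(b-a)-C(0)\,m(E)\le c+|C(0)|\,2(b-a)=:K.
\]
Here $K$ is independent of $\ga$ and $E$. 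Combining this with $\int_E L\ge k\int_E\|\dga\|+C(k)\,m(E)$ yields
\[
\int_E\|\dga\|\le \frac{K-C(k)\,m(E)}{k}\le \frac{K}{k}+\frac{|C(k)|}{k}\,m(E).
\]

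To finish, first choose $k$ with $K/k<\ep/2$, which is possible because $K$ does not depend on $k$. Then choose $\de>0$ with $|C(k)|\de/k<\ep/2$. This gives $\int_E\|\dga\|<\ep$ whenever $m(E)<\de$, uniformly in $\ga\in\cF(c)$. We should also note that $\|\dga\|\in L^1$, which follows from the same estimate with $E=[a,b]$, so the family lies in $L^1([a,b],m)$.

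The only delicate point is making sure the bound on $\int_E L$ is independent of $k$ before $k$ is chosen. The order of choices above, first $k$ and then $\de$, takes care of this. Measurability of $t\mapsto L(t,\dga(t))$ follows from the continuity of $L$ and the measurability of $\dga$.
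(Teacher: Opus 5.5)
Your proof is correct and is essentially the paper's argument: you use superlinearity with a large slope $k$ on $E$, the bound $L\ge C(0)$ on $[a,b]\setminus E$, and you choose $k$ before $\de$. One harmless slip is that, since $m([a,b]\setminus E)=(b-a)-m(E)$, the middle expression in your bound on $\int_E L$ should read $c-C(0)(b-a)+C(0)\,m(E)$; your constant $c+2|C(0)|(b-a)$ still dominates it, so nothing afterwards changes.
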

\begin{proof}
 By the uniform superlinearity,
      \[L(t,v)\ge K\|v\|+C(K).\]
      Given $\ep>0$ take $K>0$ such that 
  $$c-C(0)(b-a)<K\ep.$$

  Let $\ga\in\cF(c)$ and $E\subset[a,b]$ be measurable, then  
\[C(K)m(E)+K\int_E\|\dga(t)\|\ dt
   \le\int_E L(t,\dga(t))\ dt\]
and
   \[C(0)m([a,b]\entre E)\le \int_{[a,b]\entre E} L(t,\dga(t))\ dt \]
  Adding the inequalities we get 
  \[(C(K)-C(0))m(E)+C(0)(b-a)+K\int_E
    \|\dga(t)\|\ dt    \le A_L(\ga)\le c\]
which gives
\[\int_E\|\dga(t)\|\ dt\le\frac{c-C(0)(b-a)}K+
  \frac{C(0)-C(K)}Km(E) <\ep+\frac{C(0)-C(K)}{K}m(E).\]
This gives the uniform integrability of $\{\|\dga(t)\|:\ga\in\cF(c)\}$.
\end{proof}
  
\begin{theorem}\label{Cle}
   Let $(\ga_n)_n$ be a sequence in $\cWD([a,b])$ converging uniformly to
   $\ga$ such that
   \[\liminf_{n\to\infty}A_L(\ga_n)<+\infty.\]
   Then $\ga\in\cWD([a,b])$ and
\[A_L(\ga)\le \liminf_{n\to\infty}A_L(\ga_n).\]
\end{theorem}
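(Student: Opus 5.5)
Passing to a subsequence, we may assume $A_L(\ga_n)\to\ell:=\liminf_n A_L(\ga_n)<+\infty$ and $A_L(\ga_n)\le c$ for all $n$, for some $c\in\R$. By Lemma \ref{UI} the family $\{\|\dga_n\|\}_n$ is uniformly integrable. Theorem \ref{hor} then gives $\ga\in\cWD([a,b])$. The remaining task is lower semicontinuity of the action.

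For lower semicontinuity I would use a Fenchel-type (supporting hyperplane) argument, since convexity in $v$ is the only structure available. Fix $\ep>0$. Because $\ga$ is horizontal and absolutely continuous, we can approximate $\dga$ by a bounded measurable horizontal field. More precisely, for $R>0$ let $E_R=\{t:\|\dga(t)\|\le R\}$. By convexity,
\[L(t,\dga_n(t))\ge L(t,w(t))+\partial_vL(t,w(t))\big(\dga_n(t)-w(t)\big)\]
for any horizontal $w(t)\in\cD_{\ga_n(t)}$. The difficulty is that $\dga_n(t)$ and $\dga(t)$ live in different fibers $\cD_{\ga_n(t)}$ and $\cD_{\ga(t)}$. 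I would handle this locally. Cover $M$ by finitely many charts on which $\cD$ has a smooth orthonormal frame $X_1,\dots,X_k$. Split $[a,b]$ into finitely many small intervals on which $\ga$, and hence $\ga_n$ for large $n$ by uniform convergence, stays in one chart. There we write $\dga_n=\sum_i u_n^i X_i(\ga_n)$ with $\|u_n\|=\|\dga_n\|$. Uniform integrability of $u_n$ gives, by Dunford--Pettis, a subsequence with $u_n\rightharpoonup u$ weakly in $L^1$. Passing to the limit in $\ga_n(t)=\ga_n(s)+\int_s^t\sum_iu_n^iX_i(\ga_n)$, using uniform convergence of $X_i(\ga_n)\to X_i(\ga)$, we identify $\dga=\sum_iu^iX_i(\ga)$. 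In these coordinates the action becomes $\int \tilde L(t,\ga_n(t),u_n(t))\,dt$, with $\tilde L$ continuous in $(t,x,u)$ and convex in $u$.

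The problem is thereby reduced to the classical lower semicontinuity theorem for integrands $\tilde L(t,x,u)$ that are continuous and convex in $u$, under $x_n\to x$ uniformly and $u_n\rightharpoonup u$ weakly in $L^1$. I would prove this directly. On $E_R$, with $p(t)=\partial_u\tilde L(t,\ga(t),u(t))$ bounded, write
\[\int_{E_R}\tilde L(t,\ga_n,u_n)\ge\int_{E_R}\tilde L(t,\ga_n,u)+\int_{E_R}\partial_u\tilde L(t,\ga_n,u)(u_n-u).\]
The first term converges to $\int_{E_R}\tilde L(t,\ga,u)$ by uniform continuity on compacts. In the second term, $\partial_u\tilde L(t,\ga_n,u)\to p$ uniformly on $E_R$, while $u_n-u\rightharpoonup0$ and $\|u_n\|_{L^1}$ is bounded, so this term tends to $0$. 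On the complement, $\tilde L\ge C(0)$, so
\[\liminf_n A_L(\ga_n)\ge\int_{E_R}L(t,\dga)+C(0)\,m([a,b]\entre E_R).\]
Letting $R\to\infty$ gives the claim, by monotone convergence applied to $L-C(0)\ge0$.

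The main obstacle is the fiber mismatch, that is, making sense of weak convergence of $\dga_n$ to $\dga$ in a subriemannian setting. The local-frame trivialization together with Dunford--Pettis is the step I would need to check most carefully, in particular the identification of the weak limit $u$ with the frame coordinates of $\dga$ and the patching over finitely many subintervals. The patching itself is harmless because the action is additive over intervals and $\liminf$ is superadditive.
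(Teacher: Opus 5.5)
Your proof is correct. Its skeleton is the paper's:
\begin{itemize}
\item extract a subsequence realizing the $\liminf$;
\item get uniform integrability from Lemma~\ref{UI} and horizontality of $\ga$ from Theorem~\ref{hor};
\item localize to finitely many subintervals on which $\cD$ is trivialized;
\item use a supporting-hyperplane inequality on $E_R=\{\|\dga\|\le R\}$, together with $L\ge C(0)$ off $E_R$;
\item kill the linear term by weak $L^1$ convergence, and let $R\to\infty$ by monotone convergence.
\end{itemize}

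You differ in how the base-point mismatch is absorbed. The paper invokes Lemma~\ref{LemCle}, a pointwise approximate-convexity inequality $L(t,y,w)\ge L(t,x,v)+\partial_vL(t,x,v)\cdot(w-v)-\ep$, valid for \emph{all} $w$ once $|x-y|\le\de$ and $\|v\|\le C$. This lets the paper pair $\zeta_n-\zeta$ against the fixed bounded coefficient $\partial_vL(t,\ga,\zeta)$. However, the lemma is stated without proof, and for large $|w|$ it needs an extra growth argument (superlinearity or uniform convexity) to dominate the term $(\partial_vL(t,y,v)-\partial_vL(t,x,v))\cdot(w-v)$.

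You instead apply exact convexity at the base point $\ga_n(t)$ with the transported vector $u(t)$. You then bound the resulting error $\int_{E_R}(\partial_u\tilde L(t,\ga_n,u)-p)\cdot(u_n-u)$ by $\sup_{E_R}|\partial_u\tilde L(t,\ga_n,u)-p|\,\|u_n-u\|_{L^1}$. This uses only continuity of $\partial_u\tilde L$ on compact sets and the $L^1$ bound that uniform integrability already provides. Your route is therefore more elementary and self-contained than the paper's.

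You also supply what the paper merely asserts, namely that uniform integrability yields $\zeta_n\rightharpoonup\zeta$. Dunford--Pettis produces a weak limit, and passing to the limit in $\ga_n(t)=\ga_n(s)+\int_s^t\sum_i u_n^iX_i(\ga_n)\,d\tau$ identifies it with the frame coordinates of $\dga$. Since that limit is unique, the whole sequence converges weakly, so your extra extraction costs nothing.
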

\begin{proof}
  
Let $l=\liminf\limits_{n\to\infty}A_L(\ga_n)$,  extracting a subsequence
still denoted $\ga_n$, we have  $l=\lim\limits_{n\to\infty}A_L(\ga_n)$, and forgetting some of the first curves $\ga_n$, 
we can suppose that $\ga_n\in\cF(l+1)$ for all $n$. 
Lemma \ref{UI} implies that $(\|\dga_n\|)_n$ 
is uniformly integrable, and Theorem \ref{hor} that $\ga\in\cWD([a,b])$.
  
Let us show how we can reduce the proof to the case where 
$M$ is an open subset of $\R^d$,  $d =\dim M$ and the horizontal 
distribution is trivial. The lagrangian $L$ is bounded
below by $C(0)$ . If $[a',b']\subset[a,b]$, for all $n$ we have

\[A_L(\ga_n|[a',b'])\le
  A_L(\ga_n)-C(0)(b-b'+a'-a)\]
so that
\[\liminf_{n\to\infty}A_L(\ga_n|[a',b'])<+\infty.\]
Let now the partition $a_0 = 0 < a_1 < \ldots < a_p = 1$ and
$U_1, \ldots, U_p$ be such that
the horizontal distribution is trivial on $U_i$  and
$\ga([a_{i-1},a_i])\subset U_i$, $i=1,\ldots,p$,
It is enough to prove that  
\[A_L(\ga|[a_{i-1},a_i])\le \liminf_{n\to\infty}A_L(\ga_n|[a_{i-1},a_i])\]
because that implies and 
\begin{align*}
  A_L(\ga) =\sum_{i=1}^pA_L(\ga|[a_{i-1},a_i]) &\le
\sum_{i=1}^p\liminf _{n\to\infty} A_L(\ga_n|[a_{i-1},a_i])\\
&\le \liminf_{n\to\infty}\sum_{i=1}^pA_L(\ga_n|[a_{i-1},a_i])=\liminf_{n\to\infty}A_L(\ga_n)
\end{align*}
In the sequel we suppose that $M=U$ is an open set of $\R^d$, the
horizontal distribution is $U\times\R^m$ with $\langle ,\rangle$ the
euclidean inner product in $\R^m$, and that $\ga([a,b])$ and 
all $\ga_n([a,b])$ are contained in $U$.
We will write $\dga(t)=(\ga(t), \zeta(t))$, $\dga_n(t)=(\ga_n(t), \zeta_n(t))$.

   \begin{lemma}\label{LemCle}
     Let $U\subset\R^d$ be open, $L\in C([a,b]\times U\times\R^m)$ be
     twice differentiable in the the third variable with
     $\partial_{vv}L\in C([a,b]\times U\times\R^m,\R^{m^2})$, stricly convex
       and uniformly superlinear in the third variable and with
$\{L(t,x,v):\|v\|\le c\}$ bounded for each $c>0$.
     Given $K\subset U$ compact, $C>0$ and $\ep>0$, there exists $\de>0$
  such that if $x\in K$, $| x-y|\le\de$, $\|v\|\le C$ and $w\in\R^m$,
  then
  \[  L(t,y,w)\ge L(t,x,v)+\partial_v L(t,x,v)\cdot(w-v) -\ep.\]
  \end{lemma}
  For a fixed constant $C$ let
   \[E_C=\{t\in[a,b]:\|\zeta(t)\|\le C\}  \]
  Given $\ep> 0$, the compact set $K=\ga([a, b])\cup\bigcup_{n\in\N} \ga_n[a, b]$ 
and the  constant $C$ fixed above, we apply Lemma \ref{LemCle} to get $\de >0$
  satisfying its conclusion. By the compactness of $[a,b]$ and the
  continuity of $\ga$, 
  there is $\eta>0$ such that  $t\in[a,b]$, $d(x,\ga(t)) < \eta$
  imply $|x-\ga(t)| < \de.$
 Since  $\ga_n$ converges uniformly to $\ga$,
  there exists an integer $n_0$ such that, for each $n\ge n_0$
  we have $d(\ga_n(t),\ga(t)) < \eta$ for each $t\in[a, b]$.
Hence, for each $n\ge n_0$ and almost all $t \in E_C$, we have
  \[L (\ga_n(t),\zeta_n(t))\ge L (\ga(t), \zeta(t)) +
    \partial_v L(t,\ga(t), \zeta(t))\cdot(\zeta_n(t) -\zeta(t))-\ep,\]
 and from the uniform superlinearity, $L(t,\ga_n(t), \zeta_n(t))\ge C(0)$
 a. e.. Thus
  \begin{align}\nonumber
    A_L(\ga_n)&\ge \int_{E_C}L(t,\ga(t), \zeta(t))\ dt
                       +C(0)m([a,b]\entre E_C)\\
& +\int_{E_C}\partial_vL(t,\ga(t), \zeta(t))\cdot (\zeta_n(t) -\zeta(t))\ dt
\-\ep m(E_C) \label{1}
  \end{align}
  Since $\{\|\zeta_n(t)\|\}$ is uniformly integrable, $\zeta_n(t)$ converges to
  $\zeta(t)$ in the weak topology $\sigma(L^1,L^\infty)$.
Since $\|\zeta(t)\|\le C$ for $t\in E_C$, the function $\chi_{E_C}(t)\partial_vL(t,\ga(t), \zeta(t))$
is bounded. Thus
\[\int_{E_C}\partial_v L(t,\ga(t), \zeta(t))\cdot (\zeta_n(t) -\zeta(t))\to 0,\]
as $n\to\infty$. Taking limit in \eqref{1} we have
\[l=\lim_{n\to\infty}A_L(\ga_n)\ge \int_{E_C}L(t,\ga(t), \zeta(t))\ dt
  +C(0) m([a,b]\entre E_C)-\ep m(E_C).\]
Letting $\ep\to 0$ we have
\begin{equation}
  \label{eq1}
  l=\lim_{n\to\infty}A_L(\ga_n)\ge \int_{E_C}L(t,\ga(t), \zeta(t))\ dt
  +C(0) m([a,b]\entre E_C).
\end{equation}
Since $\zeta(t)$ is defined and finite for almost all $t\in[a,b]$ we
have that $E_C\nearrow E_\infty$ as $C\nearrow +\infty$ with
$m([a,b]\entre E_\infty)=0$. Since $L(t,\ga(t), \zeta(t))$ is bounded below by $C(0)$,
the monotone convergence theorem gives
\[\int_{E_C}L(t,\ga(t), \zeta(t))\ dt\to\int_a^bL(t,\ga(t), \zeta(t))\ dt \hbox{ as }C\to+\infty.\]
Letting $C\nearrow+\infty$ in \eqref{eq1} we finally obtain
\[l=\lim_{n\to\infty}A_L(\ga_n) \ge A_L(\ga)\]
\end{proof}
\begin{corollary}\label{low-semi}
  The action
  $A_L:\cWD([a,b])\to \R\cup\{+\infty\}$ is lower semicontinuous
  for the topology of uniform convergence in $\cWD([a,b])$.
\end{corollary}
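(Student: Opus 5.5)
The corollary follows directly from Theorem \ref{Cle}. The topology of uniform convergence on $\cWD([a,b])$ is metrizable, with distance $\sup_{t}d(\ga_1(t),\ga_2(t))$. Lower semicontinuity of $A_L$ therefore reduces to sequential lower semicontinuity: for every sequence $(\ga_n)_n$ in $\cWD([a,b])$ converging uniformly to some $\ga\in\cWD([a,b])$ we need
\[A_L(\ga)\le\liminf_{n\to\infty}A_L(\ga_n).\]
First we check that $A_L$ is well defined with values in $\R\cup\{+\infty\}$. By uniform superlinearity with $k=0$ we have $L\ge C(0)$, so the negative part of $t\mapsto L(t,\dga(t))$ is integrable and the integral is either finite or $+\infty$.

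We split into two cases. If $\liminf_{n}A_L(\ga_n)=+\infty$, the inequality is trivial. If $\liminf_n A_L(\ga_n)<+\infty$, the hypotheses of Theorem \ref{Cle} hold verbatim, and it yields both $\ga\in\cWD([a,b])$, which here is already assumed, and the desired inequality.

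The only point deserving care is the reduction from topological to sequential lower semicontinuity. If one wants to avoid invoking metrizability, an equivalent route is to show that each sublevel set $\cF(c)=\{A_L\le c\}$ is closed under uniform convergence. This again follows from Theorem \ref{Cle}: a uniform limit of curves in $\cF(c)$ lies in $\cWD([a,b])$ and has action at most $c$. I expect no real obstacle, since all the analytic work (uniform integrability via Lemma \ref{UI}, horizontality of the limit via Theorem \ref{hor}, and the convexity estimate of Lemma \ref{LemCle}) has already been done in Theorem \ref{Cle}.
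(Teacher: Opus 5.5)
Your argument is correct and is the paper's proof: the case $\liminf_n A_L(\ga_n)=+\infty$ is trivial, and otherwise Theorem \ref{Cle} gives the inequality directly. One small remark: your sublevel-set variant still checks closedness through sequences, so it does not actually avoid using metrizability, but this is harmless because the topology of uniform convergence is metrizable by $\sup_t d(\ga_1(t),\ga_2(t))$.
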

\begin{proof}
  Let $\ga_n$ be a sequence in $\cWD([a,b])$ that converges
  uniformly to $\ga\in \cWD([a,b])$. We must show that
  \[\liminf_{n\to\infty}A_L(\ga_n)\ge A_L(\ga).\]
If $\liminf\limits_{n\to\infty}A_L(\ga_n)=+\infty$ there is nothing to prove.
In other case, the result follows from Theorem \ref{Cle}
\end{proof}

\begin{corollary}[Tonelli's Theorem]\label{tonelli}
  If $M$ is complete and $K\subset M$ is compact, $c\in\R$
  then the set 
\[\cF(K,c)=\{\ga\in\cWD([a,b]): \ga([a,b])\cap K\ne\emptyset,
A_L(\ga)\le c\}\]
is a compact subset of $\cWD([a,b])$ for the topology of
uniform convergence.
\end{corollary}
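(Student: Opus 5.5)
We argue by Arzelà--Ascoli followed by lower semicontinuity. Let $(\ga_n)_n$ be a sequence in $\cF(K,c)$. By Lemma \ref{UI} the family $\{\|\dga_n\|\}_n$ is uniformly integrable. By Remark \ref{basic}(1) the family $\{\ga_n\}$ is then absolutely equicontinuous, and by Remark \ref{basic}(2) it is equicontinuous with respect to the Carnot--Carath\'eodory distance $d$.

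Next we get pointwise relative compactness. Uniform integrability gives a uniform bound $\int_a^b\|\dga_n\|\le S$: cover $[a,b]$ by finitely many intervals of length $<\de$, where $\de$ corresponds to $\ep=1$ in the definition. Hence $\ell(\ga_n)\le S$ for all $n$. Each $\ga_n$ meets $K$ at some time $t_n$, so for every $t\in[a,b]$ we have $d(\ga_n(t),K)\le d(\ga_n(t),\ga_n(t_n))\le \ell(\ga_n)\le S$. Thus all curves lie in the closed $S$-neighbourhood $\bar B_S(K)$. To see that this set is compact, use that $M$ is complete. Since $d$ induces the manifold topology, $(M,d)$ is locally compact; it is also a length space, because $d$ is defined as an infimum of lengths of horizontal curves. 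The Hopf--Rinow--Cohn-Vossen theorem then shows that closed bounded sets are compact, so $\bar B_S(K)$ is compact. This is the one step that needs an outside input; when $M$ is compact, as in the main setting, it is immediate.

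Arzelà--Ascoli now yields a subsequence $\ga_{n_k}$ converging uniformly to a continuous curve $\ga:[a,b]\to \bar B_S(K)$. We have $\liminf A_L(\ga_{n_k})\le c<\infty$, so Theorem \ref{Cle} gives $\ga\in\cWD([a,b])$ and $A_L(\ga)\le\liminf A_L(\ga_{n_k})\le c$. Finally, $\ga([a,b])\cap K\neq\emptyset$: passing to a further subsequence we may assume $t_{n_k}\to t_*$. By uniform convergence and equicontinuity, $\ga_{n_k}(t_{n_k})\to\ga(t_*)$, and since $\ga_{n_k}(t_{n_k})\in K$ and $K$ is closed, $\ga(t_*)\in K$. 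Hence $\ga\in\cF(K,c)$.

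So every sequence in $\cF(K,c)$ has a subsequence converging uniformly to an element of $\cF(K,c)$. The topology of uniform convergence is metrizable, with metric $\sup_t d(\ga(t),\eta(t))$, so sequential compactness is compactness. The main obstacle is the compactness of bounded neighbourhoods of $K$ in the noncompact complete case. I would handle it through the length-space Hopf--Rinow argument, using the earlier fact that $d$ induces the manifold topology.
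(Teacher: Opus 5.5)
Your proof is correct and follows essentially the paper's route: Lemma \ref{UI} gives uniform integrability and hence absolute equicontinuity, the curves are confined to a bounded neighbourhood of $K$, Arzel\`a--Ascoli extracts a uniformly convergent subsequence, and Theorem \ref{Cle} together with the closedness of $K$ keeps the limit in $\cF(K,c)$. The differences are minor: the paper bounds $d(\ga(s),\ga(t))\le c-C(1)(b-a)$ directly from $L(t,v)\ge\|v\|+C(1)$ rather than through uniform integrability, and it uses without comment the compactness of $\{y:d(y,K)\le R\}$, which you justify via the Hopf--Rinow--Cohn-Vossen theorem.
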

\begin{proof}
By the compactness of $K$ and Theorem \ref{Cle}, the subset
  $\cF(K,c)$ is closed in the space of continuous curves $C([a,b],M)$.
  By the uniform superlinearity,
  $L(t,v)\ge\|v\|+C(1)$ for all $t\in I$, $v\in\cD$. Thus
  for any $\ga\in\cWD([a,b])$ and any $t,s\in[a,b]$ with $t\le s$, we have 
 \[C(1)(s-t)+\int_t^s\|\dga\|\le A_L(\ga)\]
If $\ga\in\cF(c,K)$ we have
\[d(\ga(s),\ga(t))\le c-C(1)(b-a) ,\]
so
\[\ga([a,b])\subset\{y\in M:d(y,K)\le c-C(1)(b-a) \}.\]
Since $\cF(c)$ is absolutely equicontinuous by  Theorem \ref{UI} and
Remark \ref{basic} (1),
the Arzela-Ascoli Theorem implies that $\cF(K,c)$ is a compact subset
of $\cWD([a,b])$ for the topology of uniform convergence.
\end{proof}
\begin{definition} For $x,y\in M$, $a,b\in\R$ $a<b$
let $\cC_{a,b}(x,y):=\{\ga\in\cWD([a,b]): \ga(a)=x, \ga(b)=y\}$
and define the  {\em minimal action} by
  \[\bh(x,y;a,b):=h_{a,b}(x,y):=\inf\{A_L(\ga):\ga\in\cC_{a,b}(x,y)\}\]
\end{definition}
We denote $h_t(x,y):=h_{0,t}(x,y)$
\begin{corollary}[Tonelli minimizers]\label{ton-min}
    If $M$ is complete, for each $x,y\in M$ 
 and each $a,b\in\R$, $a<b$, there exists $\ga\in\cC_{a,b}(x,y)$,
 $A_L(\ga) =h_{a,b}(x,y)$.
\end{corollary}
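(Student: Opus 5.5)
The plan is the direct method of the calculus of variations: take a minimizing sequence, use Tonelli's Theorem (Corollary \ref{tonelli}) to extract a uniformly convergent subsequence, and conclude with the lower semicontinuity of Corollary \ref{low-semi}.

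The first step is to show that $h_{a,b}(x,y)$ is finite. Since $\cD$ is bracket generating and $M$ is connected, the Chow--Rashevskii theorem gives a horizontal curve joining $x$ to $y$ with finite length, for instance one of length close to $d(x,y)<\infty$. Reparametrizing it proportionally to arc length on $[a,b]$ produces $\ga_0\in\cC_{a,b}(x,y)$ with $\|\dga_0\|\le r:=2d(x,y)/(b-a)+1$ a.e. Uniform boundedness then gives $A_L(\ga_0)\le A(r)(b-a)<+\infty$, so $h_{a,b}(x,y)<+\infty$. Uniform superlinearity with $k=0$ gives $A_L\ge C(0)(b-a)$ on every curve, so $h_{a,b}(x,y)>-\infty$.

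Next, take $\ga_n\in\cC_{a,b}(x,y)$ with $A_L(\ga_n)\to h_{a,b}(x,y)$, and set $c=h_{a,b}(x,y)+1$. Discarding finitely many terms, every $\ga_n$ lies in $\cF(\{x\},c)$, because $\ga_n(a)=x$ and $A_L(\ga_n)\le c$. Since $\{x\}$ is compact, Corollary \ref{tonelli} shows that $\cF(\{x\},c)$ is compact for uniform convergence. Hence a subsequence converges uniformly to some $\ga\in\cWD([a,b])$; horizontality of the limit also follows from Theorem \ref{Cle}.

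Finally, uniform convergence preserves the endpoint conditions, so $\ga(a)=x$ and $\ga(b)=y$, and therefore $\ga\in\cC_{a,b}(x,y)$. By Corollary \ref{low-semi},
\[A_L(\ga)\le\liminf_{n\to\infty}A_L(\ga_n)=h_{a,b}(x,y).\]
The reverse inequality $A_L(\ga)\ge h_{a,b}(x,y)$ holds by the definition of $h_{a,b}$, so $A_L(\ga)=h_{a,b}(x,y)$.

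The only step that is not routine is the finiteness of $h_{a,b}(x,y)$. It relies on connectedness, which is implicit in the finiteness of the Carnot--Carath\'eodory distance, and on constructing a competitor with bounded speed so that uniform boundedness controls its action. Everything after that follows directly from the corollaries already established.
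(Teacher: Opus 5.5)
Your argument is correct and is essentially the paper's proof. Both take a minimizing sequence in $\cC_{a,b}(x,y)$, extract a uniformly convergent subsequence via the compactness of Corollary \ref{tonelli}, and conclude $A_L(\ga)=h_{a,b}(x,y)$ by lower semicontinuity (Theorem \ref{Cle}, Corollary \ref{low-semi}); your preliminary check that $h_{a,b}(x,y)$ is finite, using a bounded-speed horizontal competitor and uniform boundedness, makes explicit a step the paper takes for granted when it works at the level $\bar C+1$.
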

\begin{proof}
  Set $\bar C=\inf\{A_L(\ga):\ga\in\cC_{a,b}(x,y)\}$.
  By Corollary \ref{tonelli}, the set 
\[\{\ga\in\cC_{a,b}(x,y):A_L(\ga)\le\bar C+1\}\]
is a compact subset of $\cWD([a,b])$ for the topology of
uniform convergence.
Choose $\ga_n\in\cC_{a,b}(x,y)$ such that $A_L(\ga_n)<\bar C+\dfrac 1n$.
Then $\ga_n$ has a subsequence that converges uniformly
to some $\ga\in \cC_{a,b}(x,y)$. By Theorem \ref{Cle}, $A_L(\ga)=\bar C$. 
\end{proof}
  \section{Proof of Theorem \ref{main-evolution}}
  \label{sec:contact-lagrangians}
 We extend ideas in \cite{SWY} to our situation.
  
Let $M$ and $L\in C^2(\cD\times\R)$ be as in subsection
\ref{sec:results}.

Let $A(r)=\sup\{L(v,0):\|v\|\le r\}$.

For $\fui\in C(M)$ and $u\in C([0,T]\times M)$ we define
$\A_\fui[u]:[0,T]\times M\to\R$ by
  \[\A_\fui[u](t,x):
    =\inf\{\fui(\ga(0))+\int_0^tL(\dga(s),u(s,\ga(s)))\ ds:\ga\in\cWD([0,t]),\ \ga(t)=x\}.\]
  \begin{remark}
      As in the proof of Corollary \ref{ton-min}, it follows from Corollary \ref{tonelli},
  Theorem \ref {Cle}, and the continuity of $\fui$ that the infimum is actually a minimum.
\end{remark}
  We omit the subscript $\fui$ when that does not cause confusion.
  
  Defining  $L^u:[0,T]\times\cD\to\R$ by $L^u(t,v)=L(v,u(t,x))$ we have 
  \[A^u(r)=A(r)+\la\max u_-,\quad C^u(r)=C(r)-\la\max u_+\]
  \[\A[u](t,x)=\inf_{y\in M}\fui(y)+h^u_t(y,x)\]
where $h_t^u$ is the minimal action for $L^u$.

\begin{proposition}
  Let $\fui\in C(M)$, then $\A_\fui(C([0,T]\times M)\subset C([0,T]\times M)$ 
\end{proposition}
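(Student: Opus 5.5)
We fix $u\in C([0,T]\times M)$ and write $v=\A_\fui[u]$. Since $u$ is bounded on the compact set $[0,T]\times M$, the Lagrangian $L^u(t,w)=L(w,u(t,\pi(w)))$ satisfies the hypotheses of Section~\ref{sec:tonelli}: it is uniformly superlinear with constants $C^u(k)=C(k)-\la\max u_+$, uniformly bounded by $A^u(r)=A(r)+\la\max u_-$, and strictly convex with the same $\ka$. The bounded monotonicity $-\la\le\partial_uL<0$ is what gives these bounds.

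\textbf{Step 1 (uniform bounds on $v$).} Using the constant curve, $v(t,x)\le \fui(x)+tA^u(0)$. Using superlinearity with $k=0$, $v(t,x)\ge\min\fui+tC^u(0)$. So $v$ is bounded on $[0,T]\times M$. By the Remark following the definition of $\A_\fui$, the infimum defining $v(t,x)$ is attained by a minimizer $\ga_{t,x}$. The bounds give
\[
\int_0^tL^u(\dga_{t,x})\,ds\le \max\fui-\min\fui+T(A^u(0)-C^u(0))=:c_0 ,
\]
uniformly in $(t,x)$. For $t>0$, Lemma~\ref{UI} then yields uniform integrability of $\{\|\dga_{t,x}\|\}$, after rescaling all these curves to a common interval.

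\textbf{Step 2 (continuity in $x$).} Fix $t>0$ and $x,x'$ with $d(x,x')=\rho$ small. Take a minimizer $\ga$ for $(t,x)$ and build a competitor for $(t,x')$. Reparametrize $\ga$ on $[0,t-\rho]$, then append a unit-speed horizontal geodesic from $x$ to $x'$ on $[t-\rho,t]$. The appended piece costs at most $\rho A^u(1)$. To bound the reparametrization cost we need control of $L(\mu\dot\ga,u)$ versus $L(\dot\ga,u)$ together with $u(s,\ga)$ versus $u(s\mu,\ga)$, where $\mu=t/(t-\rho)$.

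A cleaner route is to not reparametrize but truncate: the point $\ga(t-\rho)$ is at distance at most $\ell(\ga|[t-\rho,t])+\rho$ from $x'$, and uniform integrability makes $\ell(\ga|[t-\rho,t])$ small. We replace $\ga|[t-\rho,t]$ by a geodesic from $\ga(t-\rho)$ to $x'$ of length $\eta(\rho)\to0$, traversed in time $\rho$. Its speed $\eta/\rho$ could blow up, however. To handle this, use a time window $\sigma(\rho)\to0$ with $\eta'(\sigma)/\sigma$ bounded: choose $\sigma=\sqrt\rho$ or take $\sigma$ fixed small and then let $\rho\to0$. The change of action is then bounded by $\sigma(A^u(R)-C^u(0))$. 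This gives equicontinuity of $v(t,\cdot)$ in $x$, uniformly for $t\in[\tau,T]$.

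\textbf{Step 3 (continuity in $t$, and near $t=0$).} For $t<t'$, concatenate a minimizer for $(t,x)$ with the constant curve at $x$ on $[t,t']$. This gives $v(t',x)\le v(t,x)+(t'-t)A^u(0)$, except that $u$ is evaluated along a time-shifted curve. To avoid the shift, place the constant piece at the beginning, on $[0,t'-t]$, and shift $\ga$ to $[t'-t,t']$. Now $u(s,\cdot)$ is replaced by $u(s+t'-t,\cdot)$; by uniform continuity of $u$ and $|\partial_uL|\le\la$ this costs at most $\la t\,\om_u(t'-t)$. The reverse inequality comes from restricting a minimizer for $(t',x)$: cut at time $t'-t$, move the initial point, and use the modulus of continuity of $\fui$ together with the smallness of $d(\ga(0),\ga(t'-t))$, which follows from uniform integrability. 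The same argument with $t\to0$ gives $v(t,x)\to\fui(x)$ uniformly. Combining with Step 2 gives joint continuity.

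The main obstacle is the endpoint adjustment in Steps 2 and 3. A subriemannian geodesic of length $\eta$ run in short time has large speed, so the action cost must be controlled by choosing the time window well, relying on the uniform integrability of the minimizers' speeds.
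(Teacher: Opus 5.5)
Your Step 2 has a genuine gap. Replacing $\ga|[t-\sigma,t]$ by a geodesic from $\ga(t-\sigma)$ to $x'$ run in time $\sigma$ forces speed $R=(\ell(\ga|[t-\sigma,t])+\rho)/\sigma$. Your cost bound $\sigma(A^u(R)-C^u(0))$ goes to $0$ only if $R$ stays bounded as $\sigma\to0$, i.e.\ only if $\ell(\ga|[t-\sigma,t])\le R\sigma$ uniformly over the minimizers involved. That is a uniform Lipschitz bound on minimizers near the endpoint, and uniform integrability of $\|\dga\|$ does not give it. It only says $\ell(\ga|[t-\sigma,t])\to0$ at an unknown rate. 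Already for a Lagrangian quadratic in $v$, the uniform action bound yields only $\ell\le C\sqrt\sigma$, so $\sigma A^u(\ell/\sigma)$ is merely bounded, not small.

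Fixing $\sigma$ and letting $\rho\to0$ does not help: your estimate then tends to $\sigma(A^u(\ell/\sigma)-C^u(0))$, which need not be small, and sending $\sigma\to0$ afterwards is the same problem. The reparametrization variant you abandoned would need a growth condition such as $L(\mu v,u)\le C_\mu(1+L(v,u))$, which is not assumed. Lipschitz regularity of minimizers is not available from Euler--Lagrange equations here either (abnormal minimizers, time-dependent $L^u$). This is why the paper never modifies a curve at its terminal end within a fixed time budget.

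Your Step 3 is correct, and its lower bound is exactly what repairs Step 2. For a minimizer $\ga$ of $(t',x)$ and $r=t'-t$, shifting $\ga|[r,t']$ back to $[0,t]$ gives
\[v(t,x)\le v(t',x)+\om_\fui(\ep(r))-rC^u(0)+\la t\,\om_u(r),\]
where $\ep(r)=\sup\int_0^r\|\dga\|$ over all minimizers. This tends to $0$ because Lemma \ref{UI}, applied directly on each $[0,t]$, has constants uniform in $t\le T$; no rescaling is needed. Now set $\rho=d(x,x')$ and do not touch the minimizer for $(t,x)$. Append a unit-speed minimizing geodesic from $x$ to $x'$ on $[t,t+\rho]$; this gives $v(t+\rho,x')\le v(t,x)+\rho A^u(1)$ with no time shift (this is \eqref{t+d}). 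Then return from time $t+\rho$ to time $t$ at $x'$ using the displayed estimate. If $t+\rho>T$, first extend $u$ by $u(T,\cdot)$ past $T$; this does not change $v$ on $[0,T]\times M$. The result bounds $|v(t,x')-v(t,x)|$ uniformly in $t$.

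This is precisely the paper's argument, with one difference. The paper proves the time lower bound first for Lipschitz $\fui$, absorbing $\fui(\ga(r))-\fui(\ga(0))$ by superlinearity with slope $\Lip(\fui)$, and then passes to continuous $\fui$ by uniform approximation. Your use of uniform integrability plus the modulus of continuity of $\fui$ is an equally valid replacement.

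Two minor points. Appending the constant curve on $[t,t']$ at the end causes no time shift, so that detour in Step 3 is unnecessary. The action bound in Step 1 should read $\max\fui-\min\fui+T\max(A^u(0),0)$, since the term $-TC^u(0)$ is unjustified.
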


\begin{proof} Let $u\in C([0,T]\times M)$, we have
  \[\min\fui+tC^u(0)\le\A[u](t,x)\le \max\fui+tA^uu(0).\]
  
Let $s>t$, $x\in M$. For any $y\in M$
\[\A[u](s,x)\le\fui(y)+h^u_s(y,x)\le \fui(y)+h^u_t(y,x)+A^u(0)(s-t)\]
thus
\begin{equation}
  \label{eq:s,t}
  \A[u](s,x)\le \A[u](t,x) +A^u(0)(s-t).
  \end{equation}
Joining $x$ and $z$ with an a.e. unit-speed minimizing geodesic, for
all $y\in M$ we have
\begin{align}\nonumber
  h^u_{t+d(x,z)}(y,z)&\le h^u_t(y,x)+A^u(1)d(x,z)\hbox{ and so}\\
  \A[u](t+d(x,z),z)&\le\A[u](t,x)+A^u(1)d(x,z). \label{t+d}
\end{align}                     
Since $u$ is uniformly continuous it has a continuity modulus
$\om:[0,\infty[\to[0,\infty[$

Suppose that $\fui\in\Lip(M,d)$.
For $x\in M$ and $s>t$ let $r=s-t$. For $\ga\in\cWD([0,s])$ with
$\ga(s)=x$ define $\be:[0,t]\to M$
by $\be(\tau)=\ga(\tau+r)$, so that $\be(0)=\ga(r)$, $\be(t)=x$.
\begin{align*}
  \fui(\ga(0))+\int_0^sL^u(\tau,\dga(\tau)) d\tau
=\fui(\ga(0))+\int_0^rL^u(\tau,\dga(\tau)) d\tau
+\int_0^t L^u(\tau+r,\dot\be(\tau)) d\tau\\
 \ge\fui(\ga(0))+\Lip(\fui)\int_0^{r}|\dga|+C^u(\Lip(\fui)r
 +\int_0^t( L^u(\tau,\dot\be(\tau))-\la\om(r)) d\tau\\
\ge\fui(\ga(0))+\Lip(\fui)d(\be(0),\ga(0))+\int_0^t L^u(\tau,\dot\be(\tau)) d\tau+
C^u(\Lip(\fui))r-\la t\om(r)\\
\ge\fui(\be(0)) +\int_0^t L^u(\tau,\dot\be(\tau)) d\tau+
C^u(\Lip(\fui))r-\la t\om(r). 
\end{align*}
Thus
\[\A[u](s,x)\ge\A[u](t,x)+C^u(\Lip(\fui))(s-t)-\la t\om(s-t),\]
in particular
\begin{equation}
  \label{eq:t,s}
  \A[u](s,x)\ge \fui(x)+C^u(\Lip(\fui))s.
  \end{equation}
Let $x,y\in M$
\begin{align*}
\A[u](t,y)&\le\A[u](t+d(x,y),y)-C^u(\Lip(\fui))d(x,y)+\la t\om(d(x,y))\\  
&\le\A[u](t,x)+(A^u(1)-C^u(\Lip(\fui)))d(x,y) +\la t\om(d(x,y))
\end{align*}
When $\Lip_1(u):=\sup\limits_{0\le t<s\le T}\dfrac{\|u(s,\cdot)-u(t,\cdot)\|_\infty}{s-t}<+\infty$
we have that
\begin{align*}
  \A[u](s,x)\ge\A[u](t,x)&+[C^u(\Lip(\fui))-\la t\Lip_1(u)](s-t),\quad s>t\\
|\A[u](t,y)-\A[u](t,x)|&\le [A^u(1)-C^u(\Lip(\fui))+\la t\Lip_1(u)]d(x,y).
\end{align*}
If $\fui$ is only continuous,
given $\ep>0$ there is $\psi$ Lipschitz such that $\|\fui-\psi\|_\infty<\ep$.
For $x\in M$ let $\ga\in\cWD([0,t])$ with $\ga(t)=x$,
\begin{multline*}
  \psi(\ga(0))+\ep+\int_0^tL^u(\tau, \dga(\tau)) d\tau
\ge \fui(\ga(0))+\int_0^tL^u(\tau, \dga(\tau)) d\tau\\
  \ge\psi(\ga(0))-\ep+\int_0^tL^u(\tau, \dga(\tau)) d\tau.
\end{multline*}
  Thus $\A_\psi[u]+\ep\ge\A_\fui[u]\ge\A_\psi[u]-\ep$ and so
\begin{align*}
  \|\A_\fui[u](t,\cdot)-\A_\fui[u](s,\cdot)\|_\infty
  &\le\|\A_\psi[u](t,\cdot)-\A_\psi[u](s,\cdot)\|_\infty+2\ep\\
&\le -C^u(\Lip(\psi))(s-t)+\la t\om(s-t)+2\ep\\
  |\A_\fui[u](t,x)-\A_\fui[u](t,y)|&\le |\A_\psi[u](t,x)-\A_\psi[u](t,y)|+2\ep\\
&\le(A^u(1)-C^u(\Lip(\psi)))d(x,y) +\la t\om(d(x,y))+2\ep
\end{align*}
\end{proof}

\begin{lemma}\label{A-fix}
 Let $\fui\in C(M)$, then $\A_\fui$ has a unique fixed point.
\end{lemma}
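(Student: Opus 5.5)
We will show that $\A_\fui$, acting on $C([0,T]\times M)$, is a contraction for a suitable weighted sup norm. A Banach fixed point argument then gives existence and uniqueness on each $[0,T]$. Since $T$ is arbitrary, and fixed points on $[0,T]$ restrict to fixed points on smaller intervals, the fixed points for different $T$ agree and we obtain the unique fixed point on $[0,\infty)\times M$. The preceding Proposition already guarantees that $\A_\fui$ maps $C([0,T]\times M)$ into itself, so only the contraction estimate remains.

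\textbf{Step 1 (basic comparison).} Take $u,v\in C([0,T]\times M)$, a point $(t,x)$, and a minimizer $\ga$ for $\A[v](t,x)$; the Remark after the definition of $\A_\fui$ ensures it exists. Using $\ga$ as a competitor in $\A[u](t,x)$ and the bounded monotone assumption $|\partial_uL|\le\la$, we get
\[\A[u](t,x)-\A[v](t,x)\le\int_0^t\big(L(\dga,u(s,\ga(s)))-L(\dga,v(s,\ga(s)))\big)\,ds\le\la\int_0^t\|u(s,\cdot)-v(s,\cdot)\|_\infty\,ds .\]
By symmetry the same bound holds for $|\A[u](t,x)-\A[v](t,x)|$. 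The point is that the integrand difference is controlled pointwise by the mean value theorem in the $u$ variable, uniformly in the velocity $\dga$, so no bound on the minimizer's speed is needed.

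\textbf{Step 2 (weighted norm).} Use the Bielecki norm $\|w\|_\mu=\sup_{t\in[0,T]}e^{-\mu t}\|w(t,\cdot)\|_\infty$ with $\mu=2\la$. From Step 1,
\[e^{-\mu t}|\A[u]-\A[v]|(t,x)\le\la e^{-\mu t}\int_0^te^{\mu s}\,ds\,\|u-v\|_\mu\le\frac{\la}{\mu}\|u-v\|_\mu=\tfrac12\|u-v\|_\mu .\]
So $\A_\fui$ is a contraction on the complete space $(C([0,T]\times M),\|\cdot\|_\mu)$. Alternatively, one can iterate Step 1 to get $\|\A^n u-\A^n v\|\le\frac{(\la T)^n}{n!}\|u-v\|$ and use a power-contraction argument.

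\textbf{Step 3 (consistency in $T$).} Each $\A[u](t,x)$ depends only on the values of $u$ on $[0,t]\times M$. Hence the restriction of the fixed point on $[0,T']$ to $[0,T]$, for $T<T'$, is a fixed point on $[0,T]$, and by uniqueness it coincides with the fixed point there. Gluing these gives a unique $u\in C([0,\infty)\times M)$.

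The only mildly delicate point is Step 1: we need the infimum to be attained, or else we can work with $\ep$-minimizers. We also need the competitor's action for $u$ to be finite, which follows from the same estimate. No real obstacle is expected.
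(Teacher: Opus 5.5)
Your proof is correct and is essentially the paper's argument. The key step is the same one-step estimate: plug a minimizer of one action into the other and use $|\partial_uL|\le\lambda$. The paper then iterates this estimate to get $|\A^n[\ub]-\A^n[u]|\le\frac{(\lambda t)^n}{n!}\|\ub-u\|_\infty$ and runs a Cauchy-sequence argument, which is exactly your ``alternatively'' remark; your Bielecki norm instead packages the same Volterra structure as a genuine contraction, so Banach's theorem applies directly. Your Step 3 also makes explicit that the fixed points for different $T$ agree, which the paper only uses implicitly when it defines $T_t$ through the fixed point on $[0,T]\times M$.
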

\begin{proof}
  Let $u,\ub\in C([0,T]\times M)$.
  Let $\ga\in\cWD([0,t])$ with $\ga(t)=x$ be such that
  \[\A[u](t,x)=\fui(\ga(0))+\int_0^tL(\dga(\tau),u(\tau,\ga(\tau)))\ d\tau.\]
  Then
  \begin{align*}
     \A[\ub](x,t)-\A[u](x,t)&\le \int_0^tL(\dga(\tau),\ub(\tau,\ga(\tau)))-L(\dga(\tau),u(\tau,\ga(\tau)))\ d\tau\\
&\le \la\int_0^t|\ub(\tau,\ga(\tau))-u(\tau,\ga(\tau))|\ d\tau\le \la t\|\ub-u\|_\infty 
  \end{align*}
  Similarly
 \[|\A^2[\ub](x,t)-\A^2[u](x,t)|\le  \la\int_0^t \la s\|\ub-u\|_\infty\ ds=\frac{(\la t)^2}2\|\ub-u\|_\infty\]
 and inductively
 \[|\A^n[\ub](x,t)-\A^n[u](x,t)|\le \frac{(\la t)^n}{n!}\|\ub-u\|_\infty.\]
 Thus
 \[|\A^{n+1}[u](x,t)-\A^n[u](x,t)|\le \frac{(\la t)^n}{n!}\|\A[u]-u\|_\infty\]
 which implies that $\A^n[u]$ is a Cauchy sequence for the uniform
 norm and so it converges to a fixed point $w$ of $\A$. If $\bar w$ is
 another fixed point
 \[\|\bar w-w\|_\infty\le \frac{(\la t)^n}{n!}\|\bar w-w\|_\infty\]
and then $\bar w=w$. 
\end{proof}
\begin{proposition}\label{fix=sol}
Let $\fui\in C(M)$ and let $u\in C([0,T]\times M)$ be the fixed point of
$\A_\fui$, $0\le s_1<s_2\le T$, $x\in M$
\begin{enumerate}[(a)]
\item For any $\af\in\cWD([s_1,s_2])$ we have
  \[u(s_2, \af(s_2))-u(s_1,\af(s_1))\le
    \int_{s_1}^{s_2}L(\dot\af(\tau),u(\tau,\af(\tau)))\ d\tau.\]
\item For $\ga\in\cWD([0,t])$ with $\ga(t)=x$ such that
  \[u(t,x)=\fui(\ga(0))+\int_0^tL(\dga(\tau),u(\tau,\ga(\tau)))\ d\tau,\]
  and any $s\in[0,t]$ we have that
  \[u(t,x)=u(s,\ga(s))+\int_s^tL(\dga(\tau),u(\tau,\ga(\tau)))\ d\tau.\]
\item $u$ is a viscosity solution of the Cauchy problem \eqref{cauchy}.
  \end{enumerate}
\end{proposition}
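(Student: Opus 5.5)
The plan is to use only the fixed-point identity $u=\A_\fui[u]$, i.e.
\[u(t,x)=\min\Big\{\fui(\ga(0))+\int_0^tL(\dga(\tau),u(\tau,\ga(\tau)))\,d\tau:\ \ga\in\cWD([0,t]),\ \ga(t)=x\Big\},\]
where the minimum is attained by the Remark following the definition of $\A_\fui$.

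\emph{Part (a).} Take a minimizer $\ga\in\cWD([0,s_1])$ for $u(s_1,\af(s_1))$ and concatenate it with $\af$ to get a curve $\eta\in\cWD([0,s_2])$ ending at $\af(s_2)$. Since $u$ is itself the fixed point, $u(s_2,\af(s_2))=\A[u](s_2,\af(s_2))$ is at most the cost of $\eta$ computed with the same $u$. That cost is $u(s_1,\af(s_1))+\int_{s_1}^{s_2}L(\dot\af,u(\tau,\af))\,d\tau$, which gives (a). If $s_1=0$, use the trivial curve at time $0$ instead, and note that $u(0,\cdot)=\fui$ by the definition of $\A$.

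\emph{Part (b).} Apply (a) to $\af=\ga|_{[s,t]}$ to get the inequality ``$\le$''. For the reverse, observe that $\ga|_{[0,s]}$ is admissible for $u(s,\ga(s))$, so $u(s,\ga(s))\le\fui(\ga(0))+\int_0^sL\,d\tau$. Adding $\int_s^t L\,d\tau$ to both sides gives $u(s,\ga(s))+\int_s^tL\,d\tau\le u(t,x)$. Together the two inequalities give equality.

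\emph{Part (c).} This is the standard dynamic-programming verification, carried out in a chart where $\cD$ is trivialized and where $L^u(t,v)=L(v,u(t,x))$ is continuous in $(t,x)$.

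\emph{Subsolution.} Let $\phi$ be $C^1$ and let $u-\phi$ have a local max at $(t_0,x_0)$ with $t_0>0$. For a fixed horizontal $v\in\cD_{x_0}$, choose a horizontal curve $\af$ on $[t_0-h,t_0]$ with $\af(t_0)=x_0$ and $\dot\af(t_0)=v$; for instance, integrate $\sum v_iX_i$, where $X_i$ is a local frame of $\cD$. Then (a) gives
\[\phi(t_0,x_0)-\phi(t_0-h,\af(t_0-h))\le \int_{t_0-h}^{t_0}L(\dot\af,u)\,d\tau.\]
Dividing by $h$ and letting $h\to0$ yields $\phi_t+D_x\phi(v)-L(v,u(t_0,x_0))\le0$. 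Taking the max over $v$ gives $\phi_t+H(x_0,D_x\phi,u)\le0$.

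\emph{Supersolution.} Now let $u-\phi$ have a local min at $(t_0,x_0)$, and take the minimizer $\ga$ for $u(t_0,x_0)$. Part (b) gives, for small $h$,
\[\phi(t_0,x_0)-\phi(t_0-h,\ga(t_0-h))\ge\int_{t_0-h}^{t_0}L(\dga,u)\,d\tau.\]
Write the left side as $\int_{t_0-h}^{t_0}\big(\phi_t+D_x\phi(\dga)\big)\,d\tau$, which is valid since $\phi\circ\ga$ is absolutely continuous. Then use $D_x\phi(\dga)-L(\dga,u)\le H(\ga,D_x\phi,u)$ pointwise. This gives
\[0\le\frac1h\int_{t_0-h}^{t_0}\big(\phi_t+H(\ga(\tau),D_x\phi,u)\big)\,d\tau,\]
and continuity of $H$ together with $\ga(\tau)\to x_0$ gives $\phi_t+H\ge0$ at $(t_0,x_0)$.

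The main obstacle is the supersolution step. The minimizer is only absolutely continuous, so we cannot use $\dga(t_0)$; this is why I use the integral form above, which requires only continuity of $H$ in $(x,p,u)$. That continuity follows from superlinearity and strict convexity: the max defining $H$ is attained on a bounded set locally uniformly. Finally, the initial condition holds because $u\in C([0,T]\times M)$ with $u(0,\cdot)=\fui$. Uniqueness for all $T$, together with consistency under restriction, gives the global $u$ of Theorem \ref{main-evolution}.
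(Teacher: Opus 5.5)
Your proposal is correct and follows the paper's own route. Part (a) concatenates a minimizer for $u(s_1,\af(s_1))$ with $\af$ and invokes the fixed-point identity; part (b) combines (a) with the admissibility of $\ga|_{[0,s]}$. For (c) you spell out the dynamic-programming verification that the paper only calls ``standard'', and your details are sound, including the integral form of the supersolution step for a merely absolutely continuous minimizer (superlinearity alone, without strict convexity, already gives the continuity of $H$ you need there).
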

\begin{proof}
  {\it (a)} Take $\ga\in\cWD([0,s_1])$ with $\ga(s_1)=\af(s_1)$ such that
  \[u(s_1,\ga(s_1))=\fui(\ga(0))+\int_0^{s_1}L(\dga(\tau),u(s,\ga(\tau)))\ d\tau.\]
\[u(s_2,x)\le \fui(\ga(0))+\int_0^{s_1}L(\dga(\tau),u(s,\ga(\tau)))\ d\tau+
\int_{s_1}^{s_2}L(\dot\af(\tau),u(\tau,\af(\tau)))\ d\tau\]
  
  \begin{align*}(b)\qquad
    u(t,\af(s_2))&=\fui(\ga(0))+\int_0^sL(\dga(\tau),u(\tau,\ga(\tau)))\ d\tau
  +\int_s^tL(\dga(\tau),u(\tau,\ga(\tau)))\ d\tau\\
  &\ge u(s,\ga(s))+\int_s^tL(\dga(\tau),u(\tau,\ga(\tau)))\ d\tau\ge u(t,x).
    \end{align*}
{\it (c)} By standard arguments, {\it (a)} implies that $u$ is a
    viscosity subsolution of \eqref{cauchy} and {\it (b)} implies
    that $u$ is a viscosity supersolution of \eqref{cauchy}.
  \end{proof}

Define the Lax semigroup $T_t:C(M)\to C(M)$, $t\ge 0$ by $T_t[\fui](x)=u(t,x)$,
where $u|_{[0,T]\times M}$ is the unique fixed point of
$\A_\fui :C([0,T]\times M)\to C([0,T]\times M)$.
\begin{corollary}\label{weakKAM}
  Let $\fui\in C(M)$, if $T_t[\fui]=\fui$ then
  \begin{enumerate}[(a)]
  \item For any $\af\in\cWD([s_1,s_2])$ we have
  \[\fui(\af(s_2))-\fui(\af(s_1))\le
    \int_{s_1}^{s_2}L(\dot\af(\tau),\fui(\af(\tau)))\ d\tau.\]
\item  For any $x\in M$, there is $\ga_x\in\cWD((-\infty, 0])$ with
  $\ga_x(0)=x$ such that for any $t<0$
  \[\fui(x) =\fui(\ga_x(t))+\int_t^0L(\dga_x(s),\fui(\ga_x(s)))\ ds\]
  \item $\fui$ is a viscosity solution of
\[H(x,D_xu,u)=0.\]
  \end{enumerate}
  \end{corollary}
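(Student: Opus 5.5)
The plan is to apply Proposition \ref{fix=sol} to the time-independent function $u(t,x)=\fui(x)$. The hypothesis $T_t[\fui]=\fui$ for all $t\ge0$ says precisely that, for each $T>0$, the restriction of $u$ to $[0,T]\times M$ is the unique fixed point of $\A_\fui$ given by Lemma \ref{A-fix}.

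\emph{Part (a).} We apply Proposition \ref{fix=sol}(a) with $u(\tau,\cdot)=\fui$. Since the Lagrangian is autonomous, we may first translate $\af$ in time so that its domain lies in $[0,T]$ for some $T$. This gives the stated inequality directly.

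\emph{Part (b).} Fix $x$ and, for each $n\in\N$, let $\ga_n\in\cWD([0,n])$ with $\ga_n(n)=x$ realize the minimum in the definition of $\A_\fui[u](n,x)=\fui(x)$; the minimum exists by the Remark after the definition of $\A_\fui$. By Proposition \ref{fix=sol}(b), every restriction of $\ga_n$ is calibrated. Shift time to get $\eta_n(s)=\ga_n(s+n)$ on $[-n,0]$. For every $t\in[-n,0]$ this yields
\[\fui(x)=\fui(\eta_n(t))+\int_t^0L(\dot\eta_n(s),\fui(\eta_n(s)))\,ds.\]
On a fixed interval $[-k,0]$ the actions $\int_{-k}^0L(\dot\eta_n,\fui(\eta_n))$ equal $\fui(x)-\fui(\eta_n(-k))\le 2\|\fui\|_\infty$, so they are uniformly bounded.

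We then regard these as actions of the non-autonomous-type Lagrangian $L(v,\fui(\pi v))$. This Lagrangian is uniformly superlinear, with constant $C(k)-\la\max\fui_+$, because $\fui$ is bounded and $\partial_uL\ge-\la$. Corollary \ref{tonelli} therefore gives a subsequence converging uniformly on $[-k,0]$. A diagonal argument over $k$ produces $\ga_x\in\cWD((-\infty,0])$ with $\eta_n\to\ga_x$ uniformly on compacts.

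To pass to the limit, the dependence on $u$ enters only through $\fui(\eta_n(s))$, which converges uniformly to $\fui(\ga_x(s))$. Since $|\partial_uL|\le\la$, we have
\[\Big|\int_t^0 L(\dot\eta_n,\fui(\eta_n))-\int_t^0 L(\dot\eta_n,\fui(\ga_x))\Big|\le\la|t|\,\|\fui\circ\eta_n-\fui\circ\ga_x\|_\infty\to0.\]
Lower semicontinuity (Theorem \ref{Cle}) applied to the Lagrangian $L(v,\fui(\ga_x(s)))$, which is continuous in $(s,v)$, then gives
\[\fui(x)\ge\fui(\ga_x(t))+\int_t^0L(\dga_x,\fui(\ga_x))\,ds.\]
The reverse inequality is part (a). This limiting step, where we must handle the $u$-dependence together with weak convergence of velocities, is the main obstacle, and the estimate above is what resolves it.

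\emph{Part (c).} Proposition \ref{fix=sol}(c) says $u(t,x)=\fui(x)$ is a viscosity solution of $u_t+H(x,D_xu,u)=0$. A test function $\phi(x)$ touching $\fui$ at $x_0$ also touches $u$ at $(t_0,x_0)$, with $t_0>0$ and $\phi_t=0$. The sub- and supersolution inequalities therefore reduce to $H(x_0,D\phi(x_0),\fui(x_0))\le0$, respectively $\ge0$. Alternatively, (a) and (b) give the sub- and supersolution properties directly by the same standard argument.
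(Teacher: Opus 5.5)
Your proposal is correct and follows the paper's proof. Part (a) and the calibrated curves in (b) come from Proposition \ref{fix=sol}, compactness comes from Tonelli's theorem with a diagonal argument, and the calibration of the limit curve comes from lower semicontinuity (Theorem \ref{Cle}) together with (a); your explicit $\la$-Lipschitz estimate in $u$, which freezes $\fui\circ\ga_x$ before invoking Theorem \ref{Cle}, simply spells out the step the paper compresses into ``by the continuity of $\fui$ and Proposition \ref{Cle}''.
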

  \begin{proof}
    {\it (a)} follows from item {\it (a)} of Proposition \ref{fix=sol}.

    {\it (b)}. Let $x\in M$, from item {\it (b)} of Proposition \ref{fix=sol},
  for any $T>0$ there is $\ga^T\in\cWD([-T, 0])$ with
  $\ga^T(0)=x$ such that for $-T\le t<0$
  \[\fui(x) =\fui(\ga^T(t))+\int_t^0L(\dga^T(s),\fui(\ga^T(s)))\ ds\]
  Thus \[\int_t^0L(\dga^T(s),\fui(\ga^T(s)))\ ds\le 2\|\fui\|_\infty\]
By Tonelli's Theorem there is a sequence $T_j\to\infty$
such that $\ga^{T_j}|[t,0]$ converges uniformly for the metric
$d$. Aplying this argument to a sequence $t_k\to\infty$ and using a
diagonal trick one gets a sequence $\sigma_n\to\infty$ and a curve
$\ga_x\in\cWD(]-\infty,0])$ such that
$\ga^{\sigma_n}|[t,0]$ converges $d$-uniformly 
to $\ga_x|[t,0]$  for any $t<0$. By the continuity of $\fui$ and
Proposition \ref{Cle},
\begin{align*}
  \fui(x) &=\fui(\ga_x(t))+\int_t^0L(\dga_x(s),\fui(\ga_x(s)))\ ds\\
          &\le\liminf_{n\to\infty}\fui(\ga^{\sigma_n}(t))+\
            \int_t^0L(\dga^{\sigma_n}( s),\fui(\ga^{\sigma_n} (s)))\ ds=\fui(x)
\end{align*}
{\it (c)} follows from {\it (a)} and {\it (b)} by standard arguments.
\end{proof}
  \begin{corollary}  For $s,t\ge 0$, $ T_s\circ T_t=T_{s+t}$.
  \end{corollary}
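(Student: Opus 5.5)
We prove the semigroup property by uniqueness of the fixed point in Lemma \ref{A-fix}. Fix $s,t\ge 0$ and $\fui\in C(M)$. Let $u$ be the fixed point of $\A_\fui$ on $[0,s+t]\times M$, so $u(\tau,\cdot)=T_\tau[\fui]$ for $\tau\in[0,s+t]$. This uses the fact that fixed points on $[0,T]$ restrict to fixed points on $[0,T']$ for $T'<T$; that holds because $\A_\fui[u](\tau,x)$ only involves $u$ on $[0,\tau]$, so $T_\tau$ is well defined independently of $T$. Put $\psi=T_t[\fui]=u(t,\cdot)$ and define $v(\sigma,x)=u(t+\sigma,x)$ for $\sigma\in[0,s]$. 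It then suffices to show that $v$ is a fixed point of $\A_\psi$ on $[0,s]\times M$. By uniqueness this gives $v(s,\cdot)=T_s[\psi]$, that is, $T_{s+t}[\fui]=T_s\circ T_t[\fui]$.

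To show $\A_\psi[v]=v$ we prove two inequalities, using Proposition \ref{fix=sol} applied to $u$.

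\emph{Inequality $v\le\A_\psi[v]$.} Let $\be\in\cWD([0,\sigma])$ with $\be(\sigma)=x$, and set $\af(\tau)=\be(\tau-t)$ on $[t,t+\sigma]$. Item (a) of Proposition \ref{fix=sol} gives
\[
u(t+\sigma,x)\le u(t,\be(0))+\int_t^{t+\sigma}L(\dot\af,u(\tau,\af))\,d\tau
=\psi(\be(0))+\int_0^\sigma L(\dot\be,v(\tau,\be))\,d\tau .
\]
Taking the infimum over $\be$ yields $v(\sigma,x)\le\A_\psi[v](\sigma,x)$.

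\emph{Inequality $v\ge\A_\psi[v]$.} Take a minimizer $\ga\in\cWD([0,t+\sigma])$ with $\ga(t+\sigma)=x$ realizing $u(t+\sigma,x)$; it exists by the Remark following the definition of $\A_\fui$. Item (b) of Proposition \ref{fix=sol}, applied at the intermediate time $t$, gives
\[
u(t+\sigma,x)=u(t,\ga(t))+\int_t^{t+\sigma}L(\dga,u(\tau,\ga))\,d\tau .
\]
Shifting time by $t$, the right-hand side is the cost of an admissible curve for $\A_\psi[v](\sigma,x)$, so $v(\sigma,x)\ge\A_\psi[v](\sigma,x)$.

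Combining the two inequalities, $v$ is a fixed point of $\A_\psi$. Continuity of $v$ is inherited from $u$, so $v\in C([0,s]\times M)$ and the uniqueness statement of Lemma \ref{A-fix} applies.

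The main point needing care is the consistency of $T_\tau$ across different horizons $T$, which is the causal (Volterra) structure of $\A_\fui$ noted above. The rest is a direct time-shift of the dynamic programming identities in Proposition \ref{fix=sol}.
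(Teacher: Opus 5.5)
Your proof is correct and follows the paper's argument: both use items (a) and (b) of Proposition \ref{fix=sol} together with a time shift by $t$ to show that $v(\sigma,\cdot)=u(t+\sigma,\cdot)$ satisfies the fixed-point identity for $\A_{T_t[\fui]}$. You also make explicit two points the paper leaves implicit: the uniqueness step from Lemma \ref{A-fix}, and the consistency of $T_\tau$ across different horizons $T$.
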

  \begin{proof} Let $\fui\in C(M)$ and 
    $v(\tau,x)=u(\tau+t,x)=T_{\tau+t}[\fui](x).$
      Let $\ga\in\cWD([0,s])$ with $\ga(s)=x$ and define $\af:[t,s+t]\to M$
    by $\af(\tau)=\ga(\tau-t)$
    \begin{align*}
    v(s,x)=u(s+t,x)&\le
      u(t,\af(t))+\int_t^{s+t}L(\dot\af(\tau),u(\tau,\af(\tau)))\ d\tau\\
     &= T_t[\fui](\ga(0)) +\int_0^sL(\dga(\tau),v(\tau,\ga(\tau)))\ d\tau
    \end{align*}
      There is $\af:[t,s+t]\to M$ with $\af(s+t)=x$ such that
     \[u(s+t,x)= u(t,\af(t))+\int_t^{s+t}L(\dot\af(\tau),u(\tau,\af(\tau)))\  d\tau\]
    Defining $\be:[0,s]\to M$ by $\be(\tau)=\af(\tau+t)$ we have $\be(s)=x$ and
     \[v(s,x)= T_t[\fui](\be(0)) +\int_0^sL(\dot\be(\tau),v(\tau,\be(\tau)))\ d\tau.\]
     So 
     \[ v(s,x)=\inf\{T_t[\fui](\ga(0))
       +\int_0^sL(\dga(\tau),v(\tau,\ga(\tau)))\ d\tau:\ga\in\cWD([0,s),\ \ga(s)=x\}\]
and then $T_{s+t}[\fui](x)=v(s,x)=T_s[T_t[\fui]](x)$.
\end{proof}
\section{Proof of Theorem \ref{main-stationary}}

\begin{proposition}[Monotone]\label{monotony}
 Let $\fui, \psi\in C(M)$ be such that $\fui\le\psi$, and $t\geq 0$, then
 $T_t[\fui]\le T_t[\psi]$. 
\end{proposition}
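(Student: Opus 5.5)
We prove the comparison through the fixed-point construction of Lemma \ref{A-fix}. Fix $T>0$ and let $u,w\in C([0,T]\times M)$ be the fixed points of $\A_\fui$ and $\A_\psi$, so that $u(t,\cdot)=T_t[\fui]$ and $w(t,\cdot)=T_t[\psi]$. The plan is to show that the Picard iterates $\A_\fui^n[v]$ and $\A_\psi^n[v]$, started from a common function $v$, stay ordered. Since by Lemma \ref{A-fix} they converge uniformly to $u$ and $w$, the inequality $u\le w$ then passes to the limit.

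The key observation is a double monotonicity of the operator $\A$. First, if $\fui\le\psi$ then $\A_\fui[v]\le\A_\psi[v]$ for every $v$, since for each curve $\ga$ the integrand is the same and $\fui(\ga(0))\le\psi(\ga(0))$; we then take the infimum over $\ga$. Second, because $\partial_uL<0$, the map $u\mapsto L(v,u)$ is decreasing. Hence if $v_1\le v_2$ pointwise, then $L(\dga,v_1(s,\ga(s)))\ge L(\dga,v_2(s,\ga(s)))$, and so $\A_\fui[v_1]\ge\A_\fui[v_2]$. Thus $\A$ is \emph{increasing} in the initial datum but \emph{decreasing} in $v$. This is the main obstacle: naive induction on ordered iterates fails, because the two effects compete.

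To get around it, we iterate from the same starting point and track the order only in pairs. Set $v_0=w_0=v$ (say $v=0$), $v_{n+1}=\A_\fui[v_n]$ and $w_{n+1}=\A_\psi[w_n]$. Since $\A$ is order-reversing in $v$, we have to alternate between the two monotonicities. The cleaner route is to use $\A^2$, which is order-preserving in $v$. Suppose $v_n\le w_n$. Then
\[v_{n+2}=\A_\fui\A_\fui[v_n]\le \A_\psi\A_\fui[v_n].\]
However, $\A_\fui[v_n]$ and $\A_\psi[w_n]$ are not comparable in the needed direction, so this does not close either.

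So instead we will argue directly with a comparison via curves; this is the approach to commit to. Suppose, for contradiction, that $m(t)=\max_x(u(t,x)-w(t,x))_+$ is positive somewhere, and let $t_0=\inf\{t: m(t)>0\}$. Take $x$ and the minimizer $\ga$ for $w(t,x)$ from the Remark after the definition of $\A$. Then
\[u(t,x)-w(t,x)\le \fui(\ga(0))-\psi(\ga(0))+\int_0^t\big[L(\dga,u)-L(\dga,w)\big]\,ds\le \la\int_0^t m(s)\,ds.\]
Here we use $|\partial_uL|\le\la$, together with the fact that $L$ is decreasing in $u$, so that the integrand is positive only where $u<w$. To be careful: where $u\ge w$ the integrand is $\le 0$, while where $u<w$ it is at most $\la(w-u)$, which is not controlled by $m$.

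This is the real difficulty. Monotonicity then requires the sign $\partial_uL<0$ to be handled by a Gronwall argument on $\max_x(u-w)$ in the \emph{other} direction, or by the viscosity comparison principle for \eqref{cauchy}. Since $H$ is increasing in $u$ (because $L$ is decreasing), $u$ is a subsolution and $w$ a supersolution of \eqref{cauchy} by Proposition \ref{fix=sol}(c), with $\fui\le\psi$ at $t=0$. The standard comparison for proper Hamiltonians then gives $u\le w$. I expect justifying that comparison in the subriemannian, pseudo-coercive setting to be the main obstacle; I would first try to reduce it to the Gronwall inequality above applied at a point of maximum of $u-w$.
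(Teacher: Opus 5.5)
Your proposal does not reach a proof. You discard the Picard-iteration idea and the Gronwall bound yourself, and rightly so: $\A$ is order-reversing in $v$, and the integrand $L(\dga,u)-L(\dga,w)$ is positive exactly where $u<w$ along $\ga$, which $m$ does not control. The route you finally commit to rests on a comparison principle for \eqref{cauchy} that the paper never proves and that is not standard here. The Hamiltonian $H(p,u)=L^*(\be(p),u)$ is degenerate in $p$ and superlinear. So the usual hypothesis $|H(x,p,u)-H(y,p,u)|\le\om\big(d(x,y)(1+|p|)\big)$ already fails for $\frac12\|\be(p)\|^2$. Note that even for the stationary equation the paper only obtains uniqueness under Assumption~\ref{Bar}. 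Your fallback, reducing to ``the Gronwall inequality above applied at a point of maximum of $u-w$'', has the same sign problem. A positive spatial maximum at time $t$ says nothing about the sign of $u-w$ along $\ga$ at earlier times.

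The missing idea is to restart the comparison at the last time the difference vanishes \emph{along the minimizing curve}, instead of comparing from time $0$. Suppose $T_{t_1}[\fui](x_1)>T_{t_1}[\psi](x_1)$, let $\ga$ be a minimizer for $T_{t_1}[\psi](x_1)$, and set $F(\tau)=T_\tau[\fui](\ga(\tau))-T_\tau[\psi](\ga(\tau))$. Then $F$ is continuous and $F(0)\le 0<F(t_1)$, so there is $t_0<t_1$ with $F(t_0)=0$ and $F\ge 0$ on $[t_0,t_1]$. Proposition~\ref{fix=sol}(a) for $T[\fui]$ along $\ga|_{[t_0,t_1]}$, together with Proposition~\ref{fix=sol}(b) for $T[\psi]$, gives
\[
F(t_1)\le F(t_0)+\int_{t_0}^{t_1}\Big[L\big(\dga,T_\tau[\fui](\ga(\tau))\big)-L\big(\dga,T_\tau[\psi](\ga(\tau))\big)\Big]\,d\tau .
\]
On $[t_0,t_1]$ the integrand is $\le 0$, because $T_\tau[\fui]\ge T_\tau[\psi]$ along $\ga$ and $\partial_uL<0$. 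This is precisely the favorable sign you noticed ``where $u\ge w$''. Hence $F(t_1)\le 0$, a contradiction. The dynamic-programming identities of Proposition~\ref{fix=sol} replace both the Gronwall estimate and any viscosity comparison theorem. Your choice $t_0=\inf\{t:m(t)>0\}$ was close, but the crossing time must be taken for $F$ along $\ga$, not for the spatial maximum $m$.
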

\begin{proof}
We  assume by contradiction, that there exist
$t_1>0$ and $x_1\in M$ such that $T_{t_1}[\fui](x_1) > T_{t_1}[\psi](x_1)$.
Let $\ga\in\cWD([0,t_1])$ with $\ga(t_1)=x_1$ be such that
\[T_{t_1} [\psi](x_1)=\psi(\ga(0))+\int_0^{t_1}L(\dga(\tau),T_\tau
[\psi](\ga(\tau)))\ d\tau. \]
Define
\[F(t)=T_{t}[\fui](\ga(t)) - T_{t}  [\psi](\ga(t)).\]
We have that $F$ is continuous and $F(t_1)=T_{t_1}[\fui](x_1)- T_{t_1}[\psi](x_1)>0$. 

Since $F(0)=\fui(\ga(0)) - \psi(\ga(0))\le 0,$ there exists
$t_0\in [0,t_1)$ such that $F(t_0)=0$ and $F(t)\ge 0$ for any $t\in[t_0,t_1]$.

By  Proposition \ref{fix=sol}, for $t\in [t_0,t_1]$ we have
\[F(t_1)\le
 F(t_0)   +\int_{t_0}^{t_1} L(\dga(\tau),T_\tau
[\fui](\ga(\tau))) -L(\dga(\tau),T_\tau [\psi](\ga(\tau)))\ d\tau
\leq 0+0,\]
which is a contradiction.
\end{proof}
\begin{proposition}[Non-expansive]\label{non-exp}
  Let $\fui,\psi\in C(M)$, then
  \[\|T_t[\fui]-T_t[\psi]\|_\infty\leq \|\fui-\psi\|_\infty.\]
\end{proposition}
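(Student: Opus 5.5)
We derive the estimate from Proposition \ref{monotony} and a comparison with translates by constants. Put $c=\|\fui-\psi\|_\infty$, so that $\psi-c\le\fui\le\psi+c$. The difficulty is that $L$ depends on $u$, so $T_t[\psi+c]\ne T_t[\psi]+c$ in general. We therefore compare $T_t[\psi+c]$ with $T_t[\psi]+c$ directly.

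\emph{Upper bound.} Let $w(t,x)=T_t[\psi](x)+c$; we show $T_t[\psi+c]\le w$. Let $u$ be the fixed point of $\A_\psi$, so $w=u+c$. For any curve $\ga\in\cWD([0,t])$ with $\ga(t)=x$, we have $u+c\ge u$ and $\partial_uL<0$, hence
\[
(\psi+c)(\ga(0))+\int_0^tL(\dga,w(\tau,\ga))\,d\tau\le c+\psi(\ga(0))+\int_0^tL(\dga,u(\tau,\ga))\,d\tau .
\]
Taking the infimum gives $\A_{\psi+c}[w]\le w$. Next we check that $\A_{\psi+c}$ is order preserving in $u$: if $u_1\le u_2$ then $\A[u_1]\ge\A[u_2]$, since $L$ is decreasing in $u$. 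This monotonicity is reversed, so we cannot iterate naively. Instead we run the argument of Proposition \ref{monotony} with $w$ as a subsolution-type function. Let $v=T_t[\psi+c]$ and suppose $v(t_1,x_1)>w(t_1,x_1)$. Take the minimizer $\ga$ for $w$ at $(t_1,x_1)$, namely the minimizer for $u$, for which Proposition \ref{fix=sol}(b) gives equality in the dynamic programming. Set $F(\tau)=v(\tau,\ga(\tau))-w(\tau,\ga(\tau))$. Then $F(0)=0$, and we take the last zero $t_0$ before $t_1$. Proposition \ref{fix=sol}(a) for $v$, combined with the equality for $u$ along $\ga$ (to which we add $c$), gives
\[
F(t_1)\le F(t_0)+\int_{t_0}^{t_1}\big(L(\dga,v)-L(\dga,u)\big)\,d\tau .
\]
On $[t_0,t_1]$ we have $v\ge w\ge u$, so the integrand is at most $0$. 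Hence $F(t_1)\le0$, a contradiction. Thus $T_t[\psi+c]\le T_t[\psi]+c$.

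\emph{Lower bound.} Symmetrically, $T_t[\psi-c]\ge T_t[\psi]-c$. Now let $\ga$ be a minimizer for $v=T_t[\psi-c]$ at $(t_1,x_1)$, and suppose $v<u-c$ there. Put $G=(u-c)-v$ along $\ga$, with $G(0)=0$. Using (a) for $u$ and (b) for $v$,
\[
G(t_1)\le G(t_0)+\int_{t_0}^{t_1}\big(L(\dga,u)-L(\dga,v)\big)\,d\tau .
\]
Where $G\ge0$ we have $u\ge v+c\ge v$, so the integrand is at most $0$, again a contradiction.

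\emph{Conclusion.} Combining these with Proposition \ref{monotony},
\[
T_t[\psi]-c\le T_t[\psi-c]\le T_t[\fui]\le T_t[\psi+c]\le T_t[\psi]+c ,
\]
which is the claim. The main point to watch is the sign bookkeeping in the two comparison arguments. One must pick the curve to be the minimizer of the function that enters with equality (item (b)), and use item (a) for the other. The strict sign $\partial_uL<0$ is what makes the integrand nonpositive on the set where the difference is nonnegative.
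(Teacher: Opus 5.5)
Your proof is correct. Its core is the same crossing argument the paper uses; only the organization differs.

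\textbf{The paper's route.} Set $c:=\|\fui-\psi\|_\infty$ and assume $T_{t_1}[\fui](x_1)-T_{t_1}[\psi](x_1)>c$. The paper takes a minimizer $\ga$ for $T_{t_1}[\psi](x_1)$ and sets $F(t)=T_t[\fui](\ga(t))-T_t[\psi](\ga(t))$. From $F(0)\le c$ it finds $t_0$ with $F(t_0)=c$ and $F\ge c$ on $[t_0,t_1]$. Item (a) of Proposition \ref{fix=sol} for $T[\fui]$ and item (b) for $T[\psi]$ then give $F(t_1)\le c$, a contradiction. The reverse inequality is symmetric.

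\textbf{Your route.} You run this argument only for the pairs $(\psi+c,\psi)$ and $(\psi-c,\psi)$, where the difference starts exactly at the threshold. You then sandwich $T_t[\fui]$ between $T_t[\psi\pm c]$ using Proposition \ref{monotony}.

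\textbf{Comparison.} Your two comparisons are exactly the paper's argument specialized to $\fui=\psi\pm c$. So the detour through monotonicity buys nothing logically: the threshold-$c$ version already handles arbitrary $\fui$ in one step. It does, however, make explicit the useful fact $T_t[\psi]\le T_t[\psi+c]\le T_t[\psi]+c$ for constants $c\ge 0$, with the first inequality coming from Proposition \ref{monotony}.

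\textbf{Minor points.} The preliminary observation $\A_{\psi+c}[w]\le w$ is never used. The sentence calling $\A_{\psi+c}$ ``order preserving'' contradicts the correct, order-reversing formula you write right after it. Both can simply be deleted.
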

\begin{proof}
  Assume that there are
  $t_1>0$ and $x_1\in M$ such that
  $T_{t_1}[\fui](x_1)- T_{t_1}[\psi](x_1) >\|\fui-\psi\|_\infty$.
Let $\ga\in\cWD([0,t_1])$ with $\ga(t_1)=x_1$ be such that
\[T_{t_1} [\psi](x_1)=\psi(\ga(0))+\int_0^{t_1}L(\dga(\tau),T_\tau
[\psi](\ga(\tau)))\ d\tau. \]
Define as in Proposition \ref{monotony}
\[F(t)=T_{t}[\fui](\ga(t)) - T_{t}  [\psi](\ga(t)).\]
We have that $F$ is continuous and $F(t_1) >\|\fui-\psi\|_\infty$. 
Since $F(0)\le \|\fui-\psi\|_\infty,$ there exists
$t_0\in [0,t_1)$ such that $F(t_0)=\|\fui-\psi\|_\infty$ and
$F(t)\ge \|\fui-\psi\|_\infty\ge 0$ for any $t\in[t_0,t_1]$.
By  Proposition \ref{fix=sol}, for $t\in [t_0,t_1]$ we have
\[F(t_1)\le
    F(t_0)+\int_{t_0}^{t_1} L(\dga(\tau),T_\tau
[\fui](\ga(\tau))) -L(\dga(\tau),T_\tau [\psi](\ga(\tau)))\ d\tau
\leq\|\fui-\psi\|_\infty+0,\]
which is a contradiction. Therefore
$T_t[\fui](x)- T_t[\psi](x) \le\|\fui-\psi\|_\infty$ for any $t>0$ and
$x\in M$ and similarly $T_t[\psi](x)- T_t[\fui](x) \le\|\fui-\psi\|_\infty$.
\end{proof}
 Let $c[0]$ and  $h_t(x,y)$ be the Ma\~n\'e critical value and the
 minimal action of $L(\cdot,0)$. Denote $L_0=L+c[0]$ and 
 $T_t^*$ its Lax semigroup.

According to Corollary 4 in \cite{SM},
for each $\de>0$ there is $C_\de >0$ such that
\[|h_t(x,y)+c[0]t|\le C_\de\ \hbox{ for }\ x,y\in  M, t\ge\de.\]
\begin{lemma}[Uniform bound]\label{unif-bound}
For every $\fui\in C(M)$, 
$\sup\limits_{t\ge 0}\|T^*_t \fui\|_\infty<\infty$.
\end{lemma}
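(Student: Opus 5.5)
The plan is to compare $u(t,x)=T^*_t[\fui](x)$ with the Lax--Oleinik evolution of the Lagrangian $L(\cdot,0)+c[0]$, which does \emph{not} depend on the unknown:
\[v(t,x)=\min_{y\in M}\ \fui(y)+h_t(y,x)+c[0]t .\]
First I show that $v$ is uniformly bounded in $t\ge0$. For $t\ge\de$ (fix $\de=1$), Corollary 4 of \cite{SM} gives $|v(t,x)|\le\|\fui\|_\infty+C_\de$. For $t\in[0,\de]$, I use the crude bounds $\min\fui+t(C(0)+c[0])\le v(t,x)\le\max\fui+t(A(0)+c[0])$, where the upper bound comes from the constant curve. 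Put $B=\sup_{t\ge0}\|v(t,\cdot)\|_\infty<\infty$.

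I also need the dynamic programming property of $v$. By Corollary \ref{ton-min}, for each $(t,x)$ there is a minimizer $\ga$ with $\ga(t)=x$ such that $v(t,x)=v(s,\ga(s))+\int_s^t \big(L(\dga,0)+c[0]\big)$ for all $s\in[0,t]$. For any curve $\ga$ and any $s<t$ one also has the inequality $v(t,\ga(t))\le v(s,\ga(s))+\int_s^t \big(L(\dga,0)+c[0]\big)$. Both are standard for the classical (unknown-independent) Lax semigroup.

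\emph{Upper bound.} Fix $(t,x)$, let $\ga$ be the $v$-minimizer ending at $x$, and set $G(s)=u(s,\ga(s))-v(s,\ga(s))$. Then $G$ is continuous and $G(0)=0$. Suppose $G(t)>B$, and let $s_0<t$ be the last time with $G(s_0)=B$, so that $G\ge B$ on $[s_0,t]$. On that interval $u(s,\ga(s))\ge v(s,\ga(s))+B\ge0$. Since $\partial_uL<0$, this gives $L(\dga,u)\le L(\dga,0)$ along $\ga$. Now Proposition \ref{fix=sol}(a), applied to $L_0=L+c[0]$, together with the dynamic programming identity for $v$, yields
\[G(t)\le G(s_0)+\int_{s_0}^t\big(L(\dga,u(\tau,\ga))-L(\dga,0)\big)\,d\tau\le B,\]
which is a contradiction. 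Hence $u\le v+B\le 2B$.

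\emph{Lower bound.} This is symmetric. Take instead a minimizer $\ga$ for $u$ given by the remark after the definition of $\A$, so that $u(t,x)=u(s,\ga(s))+\int_s^t \big(L(\dga,u)+c[0]\big)$ by Proposition \ref{fix=sol}(b). Set $G=v-u$ along $\ga$; again $G(0)=0$. On an interval where $G\ge B$ we have $u\le0$, hence $L(\dga,0)\le L(\dga,u)$. The sub-optimality inequality for $v$ then gives $G(t)\le G(s_0)\le B$, so $u\ge v-B\ge-2B$.

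The only real subtlety is choosing the right curve in each direction: the $v$-minimizer for the upper bound and the $u$-minimizer for the lower bound. With the wrong choice the dynamic-programming identity and inequality point the wrong way. The sign of $u$ on the final stretch is what makes the comparison go through, which is why the threshold $B=\sup\|v\|_\infty$ is used. Finally, by Proposition \ref{non-exp} the bound for one $\fui$ transfers to all of $C(M)$ anyway, though the argument above already works for arbitrary $\fui\in C(M)$.
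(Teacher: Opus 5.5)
Your argument is correct. It runs on the same mechanism as the paper's proof: a last-crossing time along a well-chosen curve, monotonicity of $L$ in $u$ on the final stretch, an $L(\cdot,0)$-minimizer for the upper bound, a $u$-minimizer for the lower bound, and the estimate $|h_t+c[0]t|\le C_\de$ for $t\ge\de$. The difference is what you compare $u$ with.

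The paper compares $u$ with $0$. It splits according to whether $u$ changes sign along the curve, and on the final stretch $[t_0,t]$ it bounds the action directly. This forces further case distinctions on the length $t-t_0$:
\begin{itemize}
\item in the lower bound, the constant $C(0)+c[0]$ handles short stretches;
\item in the upper bound, it uses the splitting $h_{t-t_0}(\bar\ga(t_0),x)=h_t(\ga(0),x)-h_{t_0}(\ga(0),\bar\ga(t_0))$ along an $h_t$-minimizer, together with $C_{1/2}$.
\end{itemize}

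You instead compare $u$ with the classical Lax--Oleinik evolution $v=\A_\fui[0]$ of $L(\cdot,0)+c[0]$, using the threshold $B=\sup_t\|v(t,\cdot)\|_\infty$. This makes the sign condition on $u$ automatic on the final stretch. Your dynamic-programming identity for $v$ is the paper's splitting trick in packaged form, so all short-time issues are absorbed into the single number $B$.

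What you gain is a uniform argument with no case analysis, and a slightly stronger conclusion: $\|T^*_t\fui-v(t,\cdot)\|_\infty\le B$ for all $t\ge0$. What the paper gains is that it needs no facts about an auxiliary evolution.

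Two small points should be made explicit:
\begin{itemize}
\item A $v$-minimizer requires minimizing over the initial point too. Corollary \ref{ton-min} fixes both endpoints; what you need comes from Corollary \ref{tonelli} and Theorem \ref{Cle}, as in the Remark after the definition of $\A$.
\item Continuity of $G$ needs joint continuity of $v$. This follows from the Proposition that $\A_\fui$ maps $C([0,T]\times M)$ into itself, applied with $u\equiv0$ to the Lagrangian $L+c[0]$.
\end{itemize}
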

\begin{proof}
 Let $u(t,x):=T^*_t\fui(x)$ then $u$ is bounded on $[0,1]\times M$:

We first show that $u(t,x)$ is uniformly bounded from below.
Assume  there is $(t,x)\in M\times(1,+\infty)$ with $u(t,x) < 0$,
otherwise $u$ is bounded from below.
There exists $\ga\in\cWD([0,t])$ with $\ga(t)=x$ such that
\begin{equation}\label{minimizer}
u(t,x)=\fui(\ga(0))+\int_0^tL_0(\dga(\tau),u(\tau,\ga(\tau)))d\tau.
\end{equation}
Then, we have the following two cases:
\begin{enumerate}[(I)]
\item There exists a $t_0\in [0,t)$ such that
  $u(t_0,\ga(t_0)) = 0$ and $u(\tau,\ga(\tau))< 0$ for $\tau\in (t_0,t]$.
\item For every $\tau\in[0,t]$,  $u(\tau,\ga(\tau)) < 0$.
\end{enumerate}
For Case (I)
\begin{align*}
u(x, t)&=u(t_0,\ga(t_0)) + \int_{t_0}^t L_0( \dga(\tau), u(\tau,\ga(\tau)))\ d\tau
 \ge\int_{t_0}^t L(\dga(\tau),0)+c[0]\ d\tau\\
&\ge \begin{cases}-(C(0)+c[0])_- & t\le t_0+1\\-C_1 & t>t_0+1 \end{cases}.
  \end{align*}
For Case (II), 
 \begin{align*}
u(t,x)&\ge\fui(\ga(0))+\int_0^tL(\dga(\tau), 0)+c[0]\ d\tau
\ge\fui(\ga(0))+h_t(\ga(0),\ga(t))+c[0]t\\&\ge\min\fui-C_1
 \end{align*}  

We now show that $u(t,x)$ is uniformly bounded from above.
Assume  there is $(t,x)\in (1,+\infty)\times M$ with $u(t,x) > 0$,
otherwise $u$ is bounded from above.
There exists $\ga\in\cWD([0,t])$ with $\ga(t)=x$ satisfying \eqref{minimizer}.
Let $\bar{\ga}\in\cWD([0,t])$ with $\bar\ga(0)=\ga(0)$, $\bar\ga(t)=x$
such that
\[\int_0^tL(\dot{\bar\ga}(\tau), 0)\ d\tau=h_t(\ga(0),x)\]
Then, we have the following two cases:
\begin{enumerate}[(I)]
\item There exists a $t_0\in [0,t)$ such that
  $u(t_0,\bar\ga(t_0)) = 0$ and $u(\tau,\bar\ga(\tau))>0$ for $\tau\in (t_0,t]$.
\item For every $\tau\in[0,t]$,  $u(\tau,\bar\ga(\tau)) > 0$.
\end{enumerate}
 For Case (I), we have
\begin{align*}
  u(t,x) &\le u(t_0,\bar\ga(t_0))
     +\int_{t_0}^t L_0(\dot{\bar\ga}(\tau), u(\tau,\bar\ga(\tau)))\ d\tau
       \le\int_{t_0}^t L(\dot{\bar\ga}(\tau),0)+c[0]\ d\tau\\
       &=h_{t-t_0}(\bar\ga(t_0), x) +c[0](t-t_0).
\end{align*}

Since $\bar\ga$ is a minimizer for $L(\cdot,0)$
\[h_{t- t_0}(\bar\ga(t_0), x)=h_t(\ga(0),x)-h_{t_0}(\ga(0), \bar\ga(t_0)),\]
thus
\[u(t,x)\le h_{t-t_0}(\bar\ga(t_0),x) +c[0](t-t_0)\le
  \begin{cases}
 C_\frac12    &t_0\le\frac 12\\
 C_1+ C_\frac 12&t_0>\frac 12
\end{cases}.\]

For Case (II), we have
\begin{align*}
  u(t,x) 
       &\le\fui(\ga(0))+ \int_0^t L(\dot{\bar\ga}(\tau),0)+c[0]\ d\tau=
         \fui(\ga(0))+h_t(\ga(0), x) +c[0]t\\
&\le \max\fui+C_1 
\end{align*}
\end{proof}
\begin{lemma}[Equi-Lipschitz]\label{equiLip}
  Suppose that $\fui\in\Lip(M,d)$. Then there is $K>0$ such that
  $|T^*_t[\fui](x)-T^*_t[\fui](y)|\le K d(x,y)$ for all $x,y\in M$,
  $t\ge 0$. 
  \end{lemma}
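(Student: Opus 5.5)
We separate small and large times. On a bounded time interval, say $t\in[0,1]$, the Lipschitz bound comes directly from the estimates in the proof of the Proposition showing $\A_\fui(C([0,T]\times M))\subset C([0,T]\times M)$, applied with $L$ replaced by $L_0=L+c[0]$ and $u=T^*_\cdot[\fui]$. Those estimates give
\[|u(t,y)-u(t,x)|\le \bigl[A^u(1)-C^u(\Lip(\fui))\bigr]d(x,y)+\la t\,\om(d(x,y)),\]
and the $\om$ term is the nuisance. To get rid of it, we first show that $u$ is Lipschitz in time on $[0,1]$. The one-sided bounds \eqref{eq:s,t} and the reverse inequality give $|u(s,x)-u(t,x)|\le C|s-t|+\la t\,\om(|s-t|)$. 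Since $u$ is the limit of the Picard iterates $\A^n[\cdot]$, we can do better: if we start the iteration from $u_0\equiv\fui$, then $\Lip_1(u_0)=0$, and the estimate
\[\A[u](s,x)\ge\A[u](t,x)+[C^u(\Lip\fui)-\la t\Lip_1(u)](s-t)\]
together with \eqref{eq:s,t} propagates a time-Lipschitz bound to each iterate. We use the Gronwall-type recursion $\ell_{n+1}\le a+\la T\ell_n$ on a short interval $T<1/\la$, where $a$ is a constant. Here the quantities $A^u$ and $C^u$ depend only on $\|u\|_\infty$, which is uniformly bounded by Lemma \ref{unif-bound}. This yields a uniform time-Lipschitz constant for $u$, and then the space-Lipschitz estimate $[A^u(1)-C^u(\Lip\fui)+\la t\Lip_1(u)]d(x,y)$ holds on short time intervals.

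For large $t$, the semigroup property lets us avoid iterating the short-time bound, whose constants would grow. Fix $t\ge1$ and $x,y\in M$ with $\de=d(x,y)$ small. Take a minimizer $\ga$ for $u(t,x)$, as in Proposition \ref{fix=sol}(b), so that
\[u(t,x)=u(t-1,\ga(t-1))+\int_{t-1}^tL_0(\dga,u(\tau,\ga)).\]
Construct a competitor $\tilde\ga$ on $[t-1,t]$ with $\tilde\ga(t-1)=\ga(t-1)$ and $\tilde\ga(t)=y$: follow $\ga$ reparametrized on $[t-1,t-\de]$ (time-compressed by the factor $1/(1-\de)$), then go along a unit-speed geodesic from $x$ to $y$ on $[t-\de,t]$. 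Proposition \ref{fix=sol}(a) gives
\[u(t,y)\le u(t-1,\ga(t-1))+\int_{t-1}^tL_0(\dot{\tilde\ga},u(\tau,\tilde\ga)).\]
To compare the two integrals we need three facts. First, the speed of the minimizer on $[t-1,t]$ must be bounded uniformly in $t$. Second, $L$ depends on $u$ with $|\partial_uL|\le\la$. Third, $|u(\tau,\tilde\ga(\tau))-u(\tau,\ga(\tau))|$ must be controlled, which requires time/space regularity of $u$ at earlier times.

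The main obstacle is the speed bound for minimizers, uniform in $t$. I would obtain it from the uniform bound $\sup_t\|u\|_\infty\le R$ (Lemma \ref{unif-bound}). Then $L_0(\cdot,u)$ ranges over a family of Lagrangians satisfying the hypotheses of Section \ref{sec:tonelli} with uniform constants, so the action over $[t-1,t]$ is at most $2R$. Since the minimizer is also a minimizer for the non-autonomous Lagrangian $L^u$, a Lipschitz regularity statement for Tonelli minimizers with a priori action bounds would give $\|\dga\|\le K_0$. If no such regularity is available in the horizontal setting, I would instead avoid comparing along $\ga$. Instead, use $u(t,\cdot)=T^*_1[u(t-1,\cdot)]$ together with non-expansiveness (Proposition \ref{non-exp}). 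Then the short-time estimate from the first step, whose constants depend only on $R$ and on a Lipschitz constant of the initial datum, shows that $T^*_1$ maps bounded continuous functions to functions whose modulus of continuity depends only on $R$. This is seen by approximating $u(t-1,\cdot)$ by Lipschitz functions as in the proof above, with the infimal convolution $\inf_z\{\psi(z)+h^u_1(z,\cdot)\}$ providing a bound of the form $A^u(1)-C^u(0)$-type constants plus the $\la\om$ term. Finally, I would bootstrap this uniform modulus into a linear bound $K\,d(x,y)$ by absorbing the $\la\om$ contribution via Gronwall over a unit time step.
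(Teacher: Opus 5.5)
There is a genuine gap in your large-time step. Neither of your two routes closes it.

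\textbf{First route (competitor curve).} It needs a speed bound, uniform in $t$, for minimizers of $L(\cdot,u(\tau,\cdot))$. Nothing in the paper provides this: the Lagrangian depends on $(\tau,x)$ only continuously through $u$, so no Euler--Lagrange or energy argument applies, and you flag the point as uncertain yourself. It also needs control of $|u(\tau,\tilde\ga(\tau))-u(\tau,\ga(\tau))|$, which is the regularity you are trying to prove.

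\textbf{Second route (uniform modulus for $T^*_1$).} It rests on the claim that $T^*_1$ sends functions bounded by $R$ to functions whose modulus of continuity depends only on $R$. The estimates you would import from the Proposition do not give this. They depend on $C^u(\Lip\psi)$ for the Lipschitz approximant $\psi$, and on the time modulus $\om$ of the unknown $u$ itself, so the claim is circular. ``Absorbing the $\la\om$ contribution via Gronwall'' to turn a modulus into a linear bound is not an argument.

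Your short-time Picard bound is essentially correct. As you note, it cannot be iterated, because $C(\cdot)$ evaluated at the new Lipschitz constant degrades at each step.

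The missing idea is that a time-Lipschitz bound propagates to all times for free. In the time estimate the $\om$-term is multiplied by $t$, so at $t=0$ it disappears. Thus \eqref{eq:s,t} and \eqref{eq:t,s} give
\[\|T^*_r[\fui]-\fui\|_\infty\le K_1 r \quad\text{for all } r\ge 0,\]
where $K_1$ depends only on $\Lip(\fui)$, $c_0$, $\la$ and $\sup_t\|T^*_t[\fui]\|_\infty$, which is finite by Lemma~\ref{unif-bound}. Your own displayed bound $|u(s,x)-u(t,x)|\le C|s-t|+\la t\,\om(|s-t|)$ already contains this at $t=0$. The semigroup property and Proposition~\ref{non-exp} then give, for every $t\ge0$,
\[\|T^*_{t+r}[\fui]-T^*_t[\fui]\|_\infty=\|T^*_t[T^*_r[\fui]]-T^*_t[\fui]\|_\infty\le\|T^*_r[\fui]-\fui\|_\infty\le K_1 r.\]
Space regularity follows at once from \eqref{t+d}, by following a unit-speed geodesic from $x$ to $y$ during time $d(x,y)$. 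This gives $T^*_{t+d(x,y)}[\fui](y)\le T^*_t[\fui](x)+K_2\,d(x,y)$ with $K_2=A(1)+c_0+\la\sup_t\|T^*_t[\fui]\|_\infty$. Hence
\[T^*_t[\fui](y)\le T^*_{t+d(x,y)}[\fui](y)+K_1d(x,y)\le T^*_t[\fui](x)+(K_1+K_2)\,d(x,y).\]
This is the paper's proof. It needs no splitting into small and large times, no Picard iteration, and no regularity of minimizers.
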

  \begin{proof}
    Let  $u(t,x)=T^*_t[\fui](x)=\A_\fui[u](t,x)+c_0t$, by
    Lemma \ref{unif-bound} $\|u\|_\infty<\infty$.
    By Proposition \ref{non-exp}, \eqref{eq:s,t} and \eqref{eq:t,s}, for $t,r\ge 0$ we have
    \begin{equation}
      \label{eq:lip-t}
      \|T^*_{t+r}[\fui]-T^*_t[\fui]\|_\infty=\|T^*_t[T^*_r[\fui]]-T^*_t[\fui]\|_\infty\le
      \|T^*_r[\fui]-\fui]\|_\infty\le K_1r
          \end{equation}
where $K_1:=\max (|A(0)|,|C(\Lip(\fui))|)+|c_0|+\la \|u\|_\infty$.

Let $x,y\in M$, $t\ge 0$, by \eqref{eq:lip-t} and \eqref{t+d},  we have
\[T^*_t[\fui](y)\le T^*_{t+d(x,y)}[\fui](y)+K_1 d(x,y)\le T^*_t[\fui](x)+(K_1+K_2)d(x,y)\]
where $K_2:=A(1)+c_0+\la\|u\|_\infty$.
  \end{proof}
  \begin{lemma}\label{main}
    Suppose $\fui\in\Lip(M,d)$ and let $\ovu=\limsup\limits_{t\to\infty}T^*_t[\fui]$,
    $\unu=\liminf\limits_{t\to\infty}T^*_t[\fui]$.
    Then
    \begin{enumerate}[(a)]
    \item $\ovu,\unu\in\Lip(M,d)$
    \item  $u^\infty=\lim\limits_{t\to\infty}T^*_t[\ovu]$,
      $u_\infty=\lim\limits_{t\to\infty}T^*_t[\unu]$ exist and  $T^*_t[u^\infty]=u^\infty$,
      $T^*_t[u_\infty]=u_\infty$ for any $t\ge 0$, so that
      $u^\infty, u_\infty$ are  viscosity solutions of \eqref{eq:HJ}.
    \item Under assumption \ref{Bar}, $T^*_t[\fui]$ converges
      uniformly, as $t\to\infty$, to a  viscosity solution of \eqref{eq:HJ}.
      \end{enumerate}
  \end{lemma}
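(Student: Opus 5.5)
Part (a) is immediate from Lemma \ref{unif-bound} and Lemma \ref{equiLip}. The family $\{T^*_t[\fui]\}_{t\ge0}$ is uniformly bounded and $K$-Lipschitz, so the pointwise $\limsup$ and $\liminf$ over $t\to\infty$ are finite and $K$-Lipschitz. Indeed, $\sup_{s\ge t}T^*_s[\fui]$ is a supremum of $K$-Lipschitz functions, hence $K$-Lipschitz. It decreases to $\ovu$ as $t\to\infty$, and a monotone pointwise limit of $K$-Lipschitz functions is $K$-Lipschitz. The same argument applies to $\unu$. By Dini's theorem on the compact $M$, $\sup_{s\ge t}T^*_s[\fui]\to\ovu$ uniformly.

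For (b), I would first show that $t\mapsto T^*_t[\ovu]$ is monotone. Fix $r\ge0$. By the semigroup property, $T^*_r[\sup_{s\ge t}T^*_s\fui]\ge T^*_r[T^*_s\fui]=T^*_{s+r}\fui$ for every $s\ge t$, using Proposition \ref{monotony}. Hence $T^*_r[\sup_{s\ge t}T^*_s\fui]\ge\sup_{s\ge t+r}T^*_s\fui$. Letting $t\to\infty$ and using the non-expansiveness of Proposition \ref{non-exp} together with the uniform convergence above, we get $T^*_r[\ovu]\ge\ovu$. Then monotonicity and the semigroup property give $T^*_{t+r}[\ovu]\ge T^*_t[\ovu]$, so $t\mapsto T^*_t[\ovu]$ is nondecreasing. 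Symmetrically, $t\mapsto T^*_t[\unu]$ is nonincreasing. These families are uniformly bounded and equi-Lipschitz by Lemmas \ref{unif-bound} and \ref{equiLip}, since $\ovu,\unu$ are Lipschitz. So the pointwise monotone limits $u^\infty,u_\infty$ exist and, by Dini's theorem, are uniform limits. For fixed $s$, non-expansiveness gives $T^*_s[u^\infty]=\lim_t T^*_s[T^*_t\ovu]=\lim_t T^*_{t+s}\ovu=u^\infty$. Corollary \ref{weakKAM}(c), applied to $L+c_0$, then shows that $u^\infty$ and $u_\infty$ are viscosity solutions of \eqref{eq:HJ}. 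This also proves Theorem \ref{main-stationary}(b).

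For (c), Assumption \ref{Bar} gives a comparison principle for \eqref{eq:HJ} by \cite{Ba}, using pseudo-coercivity and the strict monotonicity $\partial_uH=-\partial_uL>0$. Hence \eqref{eq:HJ} has a unique viscosity solution, and $u^\infty=u_\infty=:w$. It remains to squeeze $T^*_t[\fui]$ between them, and this is the main obstacle.

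I would use the inequalities $\unu\le\fui$-type comparisons along the flow. Since $\ovu\ge\sup_{s\ge t}T^*_s\fui-\ep_t$ with $\ep_t\to0$, we get
\[T^*_{r}[T^*_t\fui]\le T^*_r[\ovu]+\ep_t\le w+\ep_t\]
for all $r$. This uses monotonicity, non-expansiveness, and the fact that $T^*_r[\ovu]$ increases to $w$. Similarly, $T^*_{r+t}\fui\ge T^*_r[\unu]-\ep'_t\ge w-\ep'_t$. Hence $\limsup_\tau\|T^*_\tau\fui-w\|_\infty\le\ep_t+\ep'_t\to0$, which gives uniform convergence to $w$.

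The delicate points are the inequalities $\sup_{s\ge t}T^*_s\fui\le\ovu+\ep_t$ and $\inf_{s\ge t}T^*_s\fui\ge\unu-\ep'_t$, which come from Dini's theorem, and verifying that the comparison principle of \cite{Ba} applies in this sub-Riemannian setting.
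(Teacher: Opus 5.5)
Your proposal is correct and follows essentially the paper's route: (a) comes from the uniform bound and the equi-Lipschitz estimate, (b) proves $\ovu\le T^*_r[\ovu]$ by approximating $\sup_{s\ge t}T^*_s[\fui]$ uniformly by $\ovu$ (you invoke Dini where the paper uses a finite $\ep/K$-net argument) and then applies monotonicity and non-expansiveness, and (c) relies on the uniqueness from Assumption~\ref{Bar}. Your squeeze in (c) is valid but unnecessary, since $u_\infty\le\unu\le\ovu\le u^\infty$ together with $u_\infty=u^\infty$ already forces $\unu=\ovu$, which is exactly how the paper finishes.
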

  \begin{proof} {\it (a)} 
    Let $K>0$ be given by Lemma \ref{equiLip}.
    Let $v(s,\cdot)=\sup\limits_{t\ge s}T^*_t[\fui]$. For $x,y\in M$
    \[T^*_t[\fui](x)-Kd(x,y)\le T^*_t[\fui](y)\le T^*_t[\fui](x)+Kd(x,y),\]
    taking $\sup$ over $t\ge s:$ \, $v(s,x)-Kd(x,y)\le v(s,y)\le
    v(s,x)+Kd(x,y),$ \newline
        taking $\inf$ over $s\ge 0:$ \, $\ovu(x)-Kd(x,y)\le\ovu(y)\le\ovu(x)+Kd(x,y).$
        \newline Similarly, $|\unu(x)-\unu(y)|\le Kd(x,y)$.
        
  {\it (b)}  Let $\ep>0$. For each $x\in M$ there is $s(x)>0$ such that for $t\ge s(x)$
     \[T^*_t[\fui](x)<\ovu(x)+\ep,\]
   and if $d(x,y)<\ep/K$
     \[T^*_t[\fui](y)<T^*_t[\fui](x)+\ep<\ovu(x)+2\ep<\ovu(y)+3\ep.\]
Choose $x_1,\ldots,x_k$ such that $M=\bigcup_{i=1}^kB(x_i,\ep/K)$.
Let $s=\max\{s(x_i):i=1,\ldots k\}$. Let $t\ge s$ and $y\in M$, choose
$i=1,\ldots k$ such that $d(x_i,y)<\ep/K$ then
\[T^*_t[\fui](y)<\ovu(y)+3\ep.\]
By Propositions \ref{non-exp} and \ref{monotony}, for any $r\ge 0$
\[T^*_{r+t}[\fui]=T^*_r[T^*_t[\fui]]\le T^*_r[\ovu+3\ep]\le T^*_r[\ovu]+3\ep,\]
taking $\limsup$ as $t\to\infty$: $\ovu\le T^*_r[\ovu]+3\ep.$
Since $\ep>0$ is arbitrary, $\ovu\le T^*_r[\ovu]$, and so by Proposition \ref{monotony},
the family $(T^*_t[\ovu])_t$ is nondecreasing. Similarly, the family  $(T^*_t[\unu])_t$ is
nonincreasing. Thus, by Lemmas \ref{unif-bound} and \ref{equiLip},
the uniform limits $u^\infty=\lim\limits_{t\to\infty}T^*_t[\ovu]$
and $u_\infty=\lim\limits_{t\to\infty}T^*_t[\unu]$ exist, $u_\infty, u^\infty\in\Lip(M,d)$
and $u^\infty\ge\ovu$, $u_\infty\le\unu$.
By Proposition \ref{non-exp}, $T^*_t[u^\infty]=u^\infty$, $T^*_t[u_\infty]=u_\infty$
for any $t\ge 0$, and then $u^\infty, u_\infty$ are viscosity solutions of \eqref{eq:HJ}.

{\it (c)} We have $u_\infty\le\unu\le\ovu\le u^\infty$, and by
assumption \ref{Bar}, $u_\infty=u^\infty$. Thus $\unu=\ovu$ and by
Lemma \ref{equiLip} the uniform limit $u=\lim\limits_{t\to\infty}T^*_t[\fui]$
exists and by Proposition \ref{non-exp},
 $T^*_t[u]=u$ for any $t\ge 0$.
\end{proof}

\end{document}